%%%%%%%%%%%%%%%%%%%%%%%%%%%%%%%%%%%%%%%%%%%%%%%%%%%%%%%%%%%%%%%%%%%%%%%%%%%%%%%%
%2345678901234567890123456789012345678901234567890123456789012345678901234567890
%        1         2         3         4         5         6         7         8

\documentclass[letterpaper,10pt, journal,twoside]{IEEEtran}  % Comment this line out if you need a4paper

\IEEEoverridecommandlockouts                              % This command is only needed if 
% you want to use the \thanks command

%\overrideIEEEmargins                                      % Needed to meet printer requirements.

% See the \addtolength command later in the file to balance the column lengths
% on the last page of the document

\usepackage{amsmath,amssymb,color,cite}%,showkeys}
% The following packages can be found on http:\\www.ctan.org
%\usepackage{graphics} % for pdf, bitmapped graphics files
%\usepackage{epsfig} % for postscript graphics files
%\usepackage{mathptmx} % assumes new font selection scheme installed
%\usepackage{times} % assumes new font selection scheme installed
%\usepackage{amsmath} % assumes amsmath package installed
%\usepackage{amssymb}  % assumes amsmath package installed
\usepackage{mathtools}
\usepackage{graphicx}
\usepackage{stfloats}
\usepackage{pmat}
\usepackage{pifont}
\usepackage{latexsym}
\usepackage{verbatim}
\usepackage{cite}
\usepackage[english]{babel}
\usepackage[latin1]{inputenc}
\usepackage{enumerate}
\usepackage{amsmath,amsthm}
\usepackage{tikz}
\usetikzlibrary{tikzmark}
\usepackage{amsfonts}
\usepackage{amssymb}
\usepackage{amsbsy}
\usepackage{bm}
\newcommand{\norm}[1]{\left\lVert#1\right\rVert}

    \let\leq\leqslant
    \let\geq\geqslant

\newcommand{\bmu}{\bm{u}}
\newcommand{\bmx}{\bm{x}}

\newcommand{\bmw}{\bm{w}}

\newcommand{\bmz}{\bm{z}}

\DeclareMathOperator{\rank}{rank}
\DeclareMathOperator{\im}{im}
\DeclareMathOperator{\trace}{tr}

\newtheorem{theorem}{Theorem}
\newtheorem{lemma}[theorem]{Lemma}
\newtheorem{corollary}[theorem]{Corollary}
\theoremstyle{definition}
\newtheorem{definition}[theorem]{Definition}

\newtheorem{problem}{Problem}

\newtheorem{remark}[theorem]{Remark}
\newtheorem{assumption}{Assumption}

%\title{Non-conservative data-driven design of quadratically stabilizing controllers}

\title{From noisy data to feedback controllers: non-conservative design via a matrix S-lemma}

\author{Henk J. van Waarde, M. Kanat Camlibel, and Mehran Mesbahi%  
	% <-this % stops a space
	\thanks{}% <-this % stops a space
	\thanks{Henk van Waarde and Kanat Camlibel are with the Bernoulli Institute for Mathematics, Computer Science, and Artificial Intelligence, University of Groningen, Nij\-enborgh 9, 9747 AG, Groningen, The Netherlands. Henk van Waarde is also with the Engineering and Technology Institute Groningen, University of Groningen, Nij\-enborgh 4, 9747 AG, Groningen, The Netherlands. Mehran Mesbahi is with the William E. Boeing Department of Aeronautics and Astronautics, University of Washington, Seattle, WA 98195 USA (email: {\footnotesize{\tt h.j.van.waarde@rug.nl; m.k.camlibel@rug.nl;mesbahi@uw.edu}}).}
}
%, \emph{Member, IEEE}, \emph{Fellow, IEEE}

\begin{document}

\maketitle
\begin{abstract}
We propose a new method to obtain feedback controllers of an unknown dynamical system directly from noisy input/state data. The key ingredient of our design is a new matrix S-lemma that will be proven in this paper. We provide both strict and non-strict versions of this S-lemma, that are of interest in their own right. Thereafter, we will apply these results to data-driven control. In particular, we will derive non-conservative design methods for quadratic stabilization, $\mathcal{H}_2$ and $\mathcal{H}_{\infty}$ control, all in terms of data-based linear matrix inequalities. In contrast to previous work, the dimensions of our decision variables are independent of the time horizon of the experiment. Our approach thus enables control design from large data sets. 
\end{abstract}

\section{Introduction}
\IEEEPARstart{I}{n} this paper we study the problem of designing control laws for an unknown dynamical system using noisy data. This general problem exists for a long time, but has seen a renewed surge of interest over the last few years. The problem can be approached via different angles, for example using combined system identification and model-based control, or by computing control laws from data without the intermediate modeling step. We will contribute to the second category of methods, aiming at control design directly from noisy data.

One of the main challenges in this area is to come up with robust control laws that guarantee stability and performance of the unknown system despite the inherent uncertainty caused by noisy data. Even though there are several recent contributions addressing this issue, there are multiple open questions. In fact, one of the unsolved problems is to come up with \emph{non-conservative} control design strategies using only a finite number of data samples. 

We will tackle this problem by providing necessary and sufficient conditions on noisy data under which controllers can be obtained. As a consequence, our ensuing control technique is non-conservative, and also shown to be tractable from a computational point of view. The technical ingredient that enables our design is a new generalization of the classical S-lemma \cite{Yakubovich1977,Polik2007}, which will be proven in this paper. We will formulate our control problems using the general data informativity framework as introduced in \cite{vanWaarde2020}. As such, the results developed in this paper can be seen as a natural extension of those in \cite{vanWaarde2020} to noisy data. 

\subsection*{Literature on data-driven control}
The literature on data-driven control is expanding rapidly. Our account of previous work is therefore not exhaustive, but we note that additional references can be found in the survey \cite{Hou2013}. We mention contributions to data-driven optimal control \cite{Skelton1994,Furuta1995,Shi1998,Favoreel1999b,Aangenent2005,Pang2018,daSilva2019,Baggio2019,Mukherjee2018,Alemzadeh2019,Dean2019,vanWaarde2020c}, PID control \cite{Keel2008,Fliess2013}, predictive control \cite{Favoreel1999,Salvador2018,Huang2019,Alpago2020,Hewing2020}, and nonlinear control \cite{Tabuada2017,Dai2018,Tabuada2020,Bisoffi2020,Guo2020}. Some of these techniques are iterative in nature: the controller is updated online when new data are presented. Examples of this include policy iteration methods \cite{Bradtke1993} and iterative feedback tuning \cite{Hjalmarsson1998}. Other methods are one-shot in the sense that the controller is constructed offline from a batch of data. We mention, for instance, virtual reference feedback tuning \cite{Campi2002} and methods based on Willems' fundamental lemma \cite{Willems2005} (see also \cite{vanWaarde2020b}). The latter line of work has been quite fruitful, with contributions ranging from output matching \cite{Markovsky2008} and control by interconnection \cite{Maupong2017}, to data-enabled predictive control \cite{Coulson2019} and a data-based closed-loop system parameterization \cite{DePersis2020}. This parameterization has been used for stabilizing and optimal control design using data-based linear matrix inequalities \cite{DePersis2020}. We also mention the extension \cite{DePersis2020b} studying LQR using noisy data, and the paper \cite{Berberich2019c} for a closed-loop parameterization using noisy data. Additional recent research directions include data-driven control of networks \cite{Allibhoy2020,Baggio2020} and the interplay between data-guided control and model reduction \cite{Monshizadeh2020}. 

\subsection*{Review of the S-lemma}
First proven by Yakubovich \cite{Yakubovich1977}, the \emph{S-lemma} is a classical result in control theory and optimization \cite{Polik2007}. The result revolves around the question when the non-negativity of one quadratic function implies that of another. The crux of the S-lemma is that this seemingly difficult implication is equivalent to the feasibility of a linear matrix inequality (LMI) in a scalar variable, called a \emph{multiplier}. The act of replacing the implication by a linear matrix inequality is often referred to as the \emph{S-procedure}. The S-procedure is more generally applicable in situations where one quadratic inequality is implied by \emph{multiple} quadratic inequalities \cite{Boyd1994}. In this case, multiple scalar multipliers are used. It is well-known, however, that the S-procedure is conservative in general, although there exist special cases in which losslessness (a là S-lemma) can be shown \cite[Sec.~3]{Polik2007}. A generalized S-lemma involving more general types of multipliers was considered in \cite{Iwasaki2000,Iwasaki2005}, and was shown to be non-conservative under extra assumptions. 

The classical S-lemma, as well as the contributions mentioned above, all deal with vector variables. 
However, for reasons that will become clear in Section \ref{sectionproblem}, we need a type of S-lemma that is applicable to quadratic functions of \emph{matrix} variables. Such a result has been reported for specific quadratic functions in \cite[Thm. 3.3]{Luo2004}. In the special case that variables are \emph{bounded}, an S-lemma for matrix variables can also be derived by combining the so-called \emph{full block S-procedure} \cite{Scherer1997,Iwasaki1997,Scherer2001,Iwasaki2001} with results from the literature on LMI relaxations \cite{Scherer2005,Scherer2006,Scherer2006b}. These specific results are, however, less suited for the application of data-driven control that we have in mind. Therefore, in this paper we derive general matrix S-lemmas for both strict and non-strict inequalities. Our matrix S-lemma for non-strict inequalities is a direct generalization of the classical S-lemma \cite{Yakubovich1977}. It also recovers the result from \cite{Luo2004} as a special case. As a corollary of our matrix S-lemma for a strict inequality, we recover the S-lemma derived from the general theory on LMI relaxations \cite{Scherer2005}.

\subsection*{Our contributions}

The core of our approach is to formulate data-driven control as the problem of deciding whether one quadratic matrix inequality is implied by another one. Our first contribution is to extend the classical S-lemma to quadratic matrix inequalities. Our second contribution is to apply these results to data-driven control. In particular, we come up with design procedures for quadratic stabilization, $\mathcal{H}_2$ control and $\mathcal{H}_{\infty}$ control. 

Throughout the paper we will assume no statistics on the noise, but we will work with general bounded disturbances. We are thereby inspired by recent papers \cite{DePersis2020,Berberich2019c} that formalize the assumption of bounded disturbances in terms of quadratic matrix inequalities. In fact, we will work with an assumption on the noise that is closely related to that of \cite{Berberich2019c}, and is more general than the assumption in \cite{DePersis2020}. In terms of control design, our approach completely differs from the above papers. In fact, instead of working with data-based parameterizations of closed-loop systems \cite{DePersis2020,Berberich2019c,DePersis2020b}, we will work with a representation of all \emph{open-loop} systems explaining the data, akin to the framework of \cite{vanWaarde2020}. We believe that our approach is attractive for the following reasons:
\begin{enumerate}
    \item We provide robust guarantees on the stability and performance of the unknown data-generating system. The design involves data-guided LMI's that are tractable from a computational point of view and are easy to implement. 
    %\item Our approach is applicable to general bounded disturbances. This is an advantage when the noise does not behave according to a known probability distribution. On the other side of the spectrum, we do note that assumptions like Gaussian noise lead to sample complexity results \cite{Dean2019}, that are instead more difficult (or even impossible) to derive in the bounded setting. 
    \item By virtue of our matrix S-lemma, the design method is \emph{non-conservative}. This is in contrast with previous LMI formulations in \cite{DePersis2020,Berberich2019c} that provide sufficient conditions for controller design.
    \item Last but not least, the variables involved in our method are independent of the time horizon of the experiment. Our approach is thus applicable to large data sets. This is an advantage over closed-loop system parameterizations \cite{DePersis2020,Berberich2019c}, that become computationally intractable when applied to big data.
\end{enumerate}

\subsection*{Outline of the paper}
In Section \ref{sectionproblem} we will formulate the problem. Section \ref{sectionSlemma} contains our results on the matrix S-lemma. These results are then applied to data-driven stabilization in Section \ref{sectionddstab}, and to data-driven $\mathcal{H}_2$ control and $\mathcal{H}_{\infty}$ control in Section \ref{sectionperformance}. In Section \ref{sectionsimulations} we provide simulation examples. Finally, our conclusions are provided in Section \ref{sectionconclusions}.

\section{The problem of data-driven stabilization}
\label{sectionproblem}
Consider the linear time-invariant system
\begin{equation}
    \label{system1}
    \bmx(t+1) = A_s \bmx(t) + B_s \bmu(t) + \bmw(t),
\end{equation}
where $\bmx \in \mathbb{R}^n$ denotes the state, $\bmu \in \mathbb{R}^m$ is the input and $\bmw \in \mathbb{R}^n$ is an unknown noise term. The matrices $A_s \in \mathbb{R}^{n \times n}$ and $B_s \in \mathbb{R}^{n \times m}$ denote the unknown state and input matrices. Our goal is to design stabilizing controllers for \eqref{system1} on the basis of a finite number of measurements of the state and input of the system. To this end, suppose that we measure state and input data on a time interval\footnote{All our results are still true for data collected on multiple intervals, see \cite[Ex. 2]{vanWaarde2020} for more details on how to arrange the data matrices in this case.}, and collect these samples in the matrices
\begin{subequations}
\begin{align*}
X &:= \begin{bmatrix} x(0) & x(1) & \cdots & x(T)\end{bmatrix}, \\
U_- &:= \begin{bmatrix} u(0) & u(1) & \cdots & u(T-1)\end{bmatrix}.
\end{align*}
\end{subequations}
By defining the matrices
\begin{subequations}
\begin{align*}
X_+& := \begin{bmatrix} x(1) & x(2) & \cdots & x(T) \end{bmatrix}, \\
X_-& := \begin{bmatrix} x(0) & x(1) & \cdots & x(T-1) \end{bmatrix}, \\
W_-& := \begin{bmatrix} w(0) & w(1) & \cdots & w(T-1) \end{bmatrix}, 
\end{align*}	
\end{subequations}
we clearly have 
\begin{equation}
\label{eqdatanoise}
    X_+ = A_s X_- + B_s U_- + W_-.
\end{equation}
We emphasize that the system matrices $A_s$ and $B_s$ as well as the noise term $W_-$ are \emph{unknown}, while $X$ and $U_-$ are measured. Before we introduce the problem we will explain our assumption on the noise $W_-$. 

\subsection{Assumption on the noise}
We will formalize our assumption on the noise in terms of a quadratic matrix inequality. 
\begin{assumption}
\label{assumption1}
The noise samples $w(0),w(1),\dots,w(T-1)$, collected in the matrix $W_-$, satisfy the bound
\begin{equation}
    \label{asnoise}
    \begin{bmatrix}
    I \\ W_-^\top 
    \end{bmatrix}^\top 
    \begin{bmatrix}
    \Phi_{11} & \Phi_{12} \\
    \Phi_{12}^\top & \Phi_{22}
    \end{bmatrix}
    \begin{bmatrix}
    I \\ W_-^\top 
    \end{bmatrix} \geq 0,
\end{equation}
for known matrices $\Phi_{11} = \Phi_{11}^\top$, $\Phi_{12}$ and $\Phi_{22} = \Phi_{22}^\top < 0$. 
\end{assumption}
Note that the negative definiteness of $\Phi_{22}$ ensures that the set of noise matrices $W_-$ satisfying \eqref{asnoise} is bounded. In the special case $\Phi_{12} = 0$ and $\Phi_{22} = -I$, \eqref{asnoise} reduces to 
\begin{equation}
    \label{redasnoise}
    W_- W_-^\top = \sum_{t=0}^{T-1} w(t) w(t)^\top \leq \Phi_{11}.
\end{equation}
The inequality \eqref{redasnoise} has the interpretation that the energy of $\bmw$ is bounded on the finite time interval $[0,T-1]$. If $\bmw$ is a random variable, its \emph{sample covariance matrix} is given by 
$$
\frac{1}{T-1} W_- (I-\frac{1}{T}J)W_-^\top,
$$
where $J$ is the matrix of ones. Thus, \eqref{asnoise} can also capture known bounds on the sample covariance by the choices $\Phi_{12} = 0$ and $\Phi_{22} = -\frac{1}{T-1} (I-\frac{1}{T}J)$. We emphasize, however, that we do not make any assumptions on the statistics of $\bmw$ and work with the general bound \eqref{asnoise} instead. Note that \cite[Asm. 5]{DePersis2020} is a special case of Assumption \ref{assumption1} for the choices $\Phi_{11} = \gamma X_+ X_+^\top$ with $\gamma > 0$, $\Phi_{12} = 0$ and $\Phi_{22} = -I$. We remark that norm bounds on the individual noise samples $w(t)$ also give rise to bounds of the form \eqref{asnoise}, although this may lead to some conservatism. Indeed,  note that $\norm{w(t)}_2^2 \leq \epsilon$ implies that $w(t) w(t)^\top \leq \epsilon I$ for all $t$. As such, the bound \eqref{redasnoise} is satisfied for $\Phi_{11} = T \epsilon I$.
\begin{remark}
	\label{remarkBerb}
Note that the noise model in Assumption \ref{assumption1} is the ``transposed" of the model in \cite{Berberich2019c}, in the sense that we penalize, e.g., the term $W_- \Phi_{22} W_-^\top$ instead of a term $W_-^\top Q_w W_-$. In some cases, these two different noise models are actually equivalent. For example, if $\Phi_{11} > 0$ and $\Phi_{12} = 0$ then \eqref{asnoise} can be written via a Schur complement argument as
$$
\begin{bmatrix}
\Phi_{11} & W_- \\ W_-^\top & -\Phi_{22}^{-1}
\end{bmatrix} \geq 0.
$$
In turn, this is equivalent to $-\Phi_{22}^{-1} - W_-^\top \Phi_{11}^{-1} W_- \geq 0$, which is of the same form as \cite{Berberich2019c}.
\end{remark}

\begin{remark}
\label{remarkE}
In some cases, we may know a priori that the noise $\bmw$ does not directly affect the entire state-space, but is contained in a subspace. This prior knowledge can be captured by the noise model in Assumption \ref{assumption1}. Indeed, $W_-$ is of the form $W_- = E \hat{W}_-$ for some $\hat{W}_- \in \mathbb{R}^{r\times T}$ satisfying 
$$
\begin{bmatrix}
    I \\ \hat{W}_-^\top 
    \end{bmatrix}^\top 
    \begin{bmatrix}
    \hat{\Phi}_{11} & \hat{\Phi}_{12} \\
    \hat{\Phi}_{12}^\top & \hat{\Phi}_{22}
    \end{bmatrix}
    \begin{bmatrix}
    I \\ \hat{W}_-^\top 
    \end{bmatrix} \geq 0
$$
if and only if 
$$
\begin{bmatrix}
    I \\ W_-^\top 
    \end{bmatrix}^\top 
    \begin{bmatrix}
    E \hat{\Phi}_{11} E^\top & E \hat{\Phi}_{12} \\
    \hat{\Phi}_{12}^\top E^\top & \hat{\Phi}_{22}
    \end{bmatrix}
    \begin{bmatrix}
    I \\ W_-^\top 
    \end{bmatrix} \geq 0.
$$
Thus, the conclusion is that we can incorporate the knowledge that $W_- \in \im E$ by appropriate choices of the $\Phi$-matrices in \eqref{asnoise}. Showing the above claim is straightforward: note that the ``only if" statement follows by pre- and post-multiplication with $E$ and $E^\top$, respectively. The ``if" part follows by noting that $x \in \ker E^\top$ implies $x^\top W_- \hat{\Phi}_{22} W_-^\top x \geq 0$, thus $W_-^\top x = 0$. Hence, $\ker E^\top \subseteq \ker W_-^\top$, equivalently, $\im W_- \subseteq \im E$.
\end{remark}

\subsection{Problem formulation}

We will follow the general framework for data-driven analy\-sis and control in \cite{vanWaarde2020}. To this end, we define the set of all systems $(A,B)$ explaining the data $(U_-,X)$, i.e., all $(A,B)$ satisfying 
\begin{equation}
    \label{dataeq}
    X_+ = A X_- + B U_- + W_-
\end{equation}
for some $W_-$ satisfying \eqref{asnoise}. We denote this set by $\Sigma$: 
$$
\Sigma := \{ (A,B) \mid \eqref{dataeq} \text{ holds for some } W_- \text{ satisfying } \eqref{asnoise} \}.
$$
We can only guarantee that a state feedback $\bmu = K \bmx$ stabilizes the true system $(A_s,B_s)$ if it stabilizes \emph{all} systems in $\Sigma$. This motivates the following definition of \emph{informative} data. Loosely speaking, data are called informative if they enable the design of a controller that stabilizes all systems in $\Sigma$ (and thus, the unknown $(A_s,B_s)$).

\begin{definition}
\label{definformativity}
Assume that $(U_-,X)$ satisfies \eqref{eqdatanoise} for some $W_-$ satisfying Assumption \ref{assumption1}.
The data $(U_-,X)$ are called \emph{informative for quadratic stabilization} if there exists a feedback gain $K$ and a matrix $P = P^\top > 0$ such that
\begin{equation}
    \label{lyapunovineq}
P - (A+BK) P (A+BK)^\top > 0
\end{equation}
for all $(A,B) \in \Sigma$.
\end{definition}

Note that in particular, we are interested in \emph{quadratic stabilization} and we ask for a \emph{common} Lyapunov matrix $P$ for all $(A,B) \in \Sigma$. We will not treat $(A,B)$-dependent Lyapunov matrices in this paper, but consider this case for future work instead.  

Definition~\ref{definformativity} leads to two natural problems. First, we are interested in the question under which conditions the data are informative. We formalize this in the following problem.

\begin{problem}[Informativity]
\label{problem1}
Assume that $(U_-,X)$ satisfies \eqref{eqdatanoise} for some $W_-$ satisfying Assumption \ref{assumption1}.
Find necessary and sufficient conditions under which the data $(U_-,X)$ are informative for quadratic stabilization. 
\end{problem}

The second problem is a design issue: we are interested in procedures to come up with a feedback that stabilizes all systems in $\Sigma$.

\begin{problem}[Control design]
\label{problem2}
Assume that $(U_-,X)$ satisfies \eqref{eqdatanoise} for some $W_-$ satisfying Assumption \ref{assumption1}.
If the data $(U_-,X)$ are informative for quadratic stabilization, find a stabilizing feedback gain $K$ such that \eqref{lyapunovineq} is satisfied for all $(A,B) \in \Sigma$.
\end{problem}
In addition to data-driven stabilization, we are also interested in including performance specifications. Natural extensions to Problems \ref{problem1} and \ref{problem2} will be discussed in Section~\ref{sectionperformance}.

\subsection{Our approach}
In what follows, we will outline our strategy for solving Problems \ref{problem1} and \ref{problem2}. Let $(A,B) \in \Sigma$ and rewrite \eqref{dataeq} as
\begin{equation}
    \label{Wrelation}
    W_- = X_+ - A X_- - B U_-.
\end{equation}
Recall that by Assumption \ref{assumption1}, we have 
$$
\begin{bmatrix}
    I \\ W_-^\top 
    \end{bmatrix}^\top 
    \begin{bmatrix}
    \Phi_{11} & \Phi_{12} \\
    \Phi_{12}^\top & \Phi_{22}
    \end{bmatrix}
    \begin{bmatrix}
    I \\ W_-^\top 
    \end{bmatrix} \geq 0.
$$
By substitution of \eqref{Wrelation}, this yields
\begin{equation}
    \label{ineqAB}
    \begin{bmatrix}
    I \\ A^\top \\ B^\top 
    \end{bmatrix}^\top 
    \begin{bmatrix}
    I & X_+ \\ 0 & -X_- \\ 0 & -U_-
    \end{bmatrix}
    \begin{bmatrix}
    \Phi_{11} & \Phi_{12} \\
    \Phi_{12}^\top & \Phi_{22}
    \end{bmatrix}
    \begin{bmatrix}
    I & X_+ \\ 0 & -X_- \\ 0 & -U_-
    \end{bmatrix}^\top
    \begin{bmatrix}
    I \\ A^\top \\ B^\top 
    \end{bmatrix} \geq 0.
\end{equation}
This shows that $A$ and $B$ satisfy a \emph{quadratic matrix inequality} (QMI) of the form \eqref{ineqAB}\footnote{We note that quadratic uncertainty descriptions have also arisen in the papers \cite{Umenberger2019,Ferizbegovic2020,Iannelli2020} studying data-driven control under the assumption that $\bmw$ is a normally distributed process noise.}. In fact, the set $\Sigma$ of all systems explaining the data can be equivalently characterized in terms of \eqref{ineqAB}, as asserted in the following lemma.

\begin{lemma}
\label{lemmaSigma}
We have that $\Sigma = \left\{ (A,B) \mid \eqref{ineqAB} \text{ is satisfied} \right\}$.
\end{lemma}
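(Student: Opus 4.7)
The plan is to establish the two set inclusions by exploiting the fact that, for any given candidate pair $(A,B)$, the noise matrix $W_-$ compatible with the data equation \eqref{dataeq} is in fact \emph{uniquely} determined as $W_- = X_+ - A X_- - B U_-$. Once this uniqueness is observed, the lemma reduces to checking that this canonical choice of $W_-$ satisfies the noise bound \eqref{asnoise} if and only if the QMI \eqref{ineqAB} holds.

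For the inclusion $\Sigma \subseteq \{(A,B)\mid \eqref{ineqAB}\}$, I would take $(A,B) \in \Sigma$ and let $W_-$ be a noise matrix witnessing membership, so that both \eqref{dataeq} and \eqref{asnoise} hold. Rearranging \eqref{dataeq} as in \eqref{Wrelation} gives $W_- = X_+ - A X_- - B U_-$, and substituting this expression into the noise bound \eqref{asnoise} yields precisely the factorization
\[
\begin{bmatrix} I \\ W_-^\top \end{bmatrix} = \begin{bmatrix} I & X_+ \\ 0 & -X_- \\ 0 & -U_- \end{bmatrix}^\top \begin{bmatrix} I \\ A^\top \\ B^\top \end{bmatrix},
\]
which transforms \eqref{asnoise} verbatim into \eqref{ineqAB}. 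This is the derivation already sketched in the paragraph preceding the lemma.

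For the reverse inclusion, assume $(A,B)$ satisfies \eqref{ineqAB}. I would simply \emph{define} $W_- := X_+ - A X_- - B U_-$. Then \eqref{dataeq} is satisfied by construction, and substituting this definition back into the noise bound is exactly the same factorization as above read in the opposite direction, so \eqref{asnoise} follows immediately from the assumed QMI \eqref{ineqAB}. Hence $(A,B) \in \Sigma$.

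There is no real obstacle here; the lemma is essentially a tautology after one observes that the map $(A,B) \mapsto X_+ - A X_- - B U_-$ pins down $W_-$ uniquely given the data, so both directions reduce to the same algebraic substitution. The only thing worth emphasizing in the write-up is that no rank or solvability hypothesis on $X_-$ or $U_-$ is needed, because we are not trying to recover $(A,B)$ from $W_-$ but rather the other way around.
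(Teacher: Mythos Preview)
Your proposal is correct and follows exactly the same route as the paper's proof: both directions hinge on defining (or recognizing) $W_- = X_+ - A X_- - B U_-$ and observing that the substitution turns \eqref{asnoise} into \eqref{ineqAB} and vice versa. Your additional remarks on uniqueness of $W_-$ and on the absence of rank hypotheses are accurate but not needed for the argument.
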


\begin{proof}
Suppose that $(A,B) \in \Sigma$. Then \eqref{Wrelation} is satisfied for some $W_-$ satisfying \eqref{asnoise}. This means that \eqref{ineqAB} holds. As such
$$
\Sigma \subseteq \left\{ (A,B) \mid \eqref{ineqAB} \text{ is satisfied} \right\}.
$$
To prove the reverse inclusion, let $(A,B)$ be such that \eqref{ineqAB} is satisfied. Define $W_- := X_+ - A X_- - BU_-$. By \eqref{ineqAB}, $W_-$ satisfies the assumption \eqref{asnoise}. Since \eqref{dataeq} holds for $(A,B)$ by construction, we conclude that $(A,B) \in \Sigma$. 
\end{proof}
By Lemma~\ref{lemmaSigma} the set $\Sigma$ of systems explaining the data is characterized by a quadratic matrix inequality in $(A,B)$. Next, we turn our attention to the design condition \eqref{lyapunovineq}. Suppose that we fix\footnote{We make this hypothesis purely to explain the ideas behind our approach. In fact, in Section~\ref{sectionddstab} we show how $P$ and $K$ can be computed from data.} a Lyapunov matrix $P = P^\top > 0$ and a feedback gain $K$. Note that the inequality \eqref{lyapunovineq} is equivalent to 
\begin{equation}
\label{ineqABPK}
    \begin{bmatrix}
    I \\ A^\top \\ B^\top 
    \end{bmatrix}^\top
    \begin{bmatrix}
    P & 0 & 0 \\
    0 & -P & -PK^\top \\
    0 & -KP & -KPK^\top
    \end{bmatrix}
    \begin{bmatrix}
    I \\ A^\top \\ B^\top 
    \end{bmatrix} > 0,
\end{equation}
which is yet another quadratic matrix inequality in $A$ and $B$. Therefore, Problem \ref{problem1} essentially boils down to understanding under which conditions the quadratic matrix inequality \eqref{ineqABPK} holds for all $(A,B)$ satisfying the quadratic matrix inequality \eqref{ineqAB}. Data-driven stabilization thus naturally leads to the following fundamental question:  
$$
\textit{When does one QMI imply another QMI?}
$$
The familiar reader will immediately recognize the similarity between the above question and the statement of the so-called \emph{S-lemma} \cite{Polik2007}. In fact, the S-lemma provides conditions under which the non-negativity of one quadratic function implies that of another one. This motivates the following section, in which we generalize the S-lemma to matrix variables.

\section{The matrix-valued S-lemma}
\label{sectionSlemma}
In this section we present a new S-lemma with matrix variables. Before we do so, we provide a brief recap on the classical S-lemma. 
\subsection{Recap of the classical S-lemma}
A function $f: \mathbb{R}^n \to \mathbb{R}$ is called \emph{quadratic} if it can be written in the form
\begin{align}
f(x) &= \begin{bmatrix}
     1 \\ x
     \end{bmatrix}^\top \begin{bmatrix}
     M_{11} & M_{12} \\ M_{12}^\top & M_{22}
     \end{bmatrix} \begin{bmatrix}
     1 \\ x
     \end{bmatrix}, \label{f(x)} 
\end{align}
for some $M_{11} \in \mathbb{R}$, $M_{12} \in \mathbb{R}^{1 \times n}$ and $M_{22} = M_{22}^\top \in \mathbb{R}^{n \times n}$. A homogeneous quadratic function of the form $f(x) = x^\top M_{22} x$ is called a \emph{quadratic form}. The following theorem describes the celebrated S-lemma, proven by Yakubovich in \cite{Yakubovich1977}, see also \cite[Thm. 2.2]{Polik2007}.

\begin{theorem}[S-lemma]
\label{Slemma}
Let $f,g : \mathbb{R}^n \to \mathbb{R}$ be quadratic functions. Suppose that there exists $\bar{x} \in \mathbb{R}^n$ such that $g(\bar{x}) > 0$. Then $f(x) \geq 0$ for all $x \in \mathbb{R}^n$ such that $g(x) \geq 0$ if and only if there exists a scalar $\alpha \geq 0$ such that
\begin{equation}
    \label{fg}
f(x) - \alpha g(x) \geq 0 \quad \forall x \in \mathbb{R}^n.
\end{equation}
\end{theorem}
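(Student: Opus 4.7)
The easy direction, the implication $(\Leftarrow)$, is immediate: if $f(x) - \alpha g(x) \geq 0$ for all $x$ with $\alpha \geq 0$, then $g(x)\geq 0$ forces $f(x)\geq \alpha g(x) \geq 0$. The substance of the theorem is the converse. My plan is to follow the classical route via homogenization, Dines' theorem on the convexity of the joint range of two homogeneous quadratic forms, and a separating hyperplane argument. Slater's condition $g(\bar x)>0$ will play its usual role of ensuring that the separating functional gives a genuine nonnegative multiplier rather than a degenerate one.

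\textbf{Homogenization.} Write
$$
f(x) = \begin{bmatrix} 1 \\ x \end{bmatrix}^\top M \begin{bmatrix} 1 \\ x \end{bmatrix}, \qquad g(x) = \begin{bmatrix} 1 \\ x \end{bmatrix}^\top N \begin{bmatrix} 1 \\ x \end{bmatrix},
$$
and define the homogeneous quadratic forms $\tilde f(y) = y^\top M y$ and $\tilde g(y) = y^\top N y$ on $\mathbb{R}^{n+1}$, where $y = (t,\xi)$. The joint range
$$
T := \{(\tilde f(y),\tilde g(y)) : y \in \mathbb{R}^{n+1}\} \subset \mathbb{R}^2
$$
is convex by Dines' theorem, and is a cone closed under multiplication by nonnegative scalars since $\tilde f,\tilde g$ are homogeneous of degree two.

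\textbf{Emptiness of the bad quadrant.} The next step is to show $T \cap \{(u,v) : u<0,\ v>0\} = \emptyset$. Given $y=(t,\xi)$, if $t\neq 0$ then $(\tilde f(y),\tilde g(y)) = t^2(f(\xi/t),g(\xi/t))$, so any point in the bad quadrant would violate the hypothesis $f(x)\geq 0$ whenever $g(x)\geq 0$. The delicate case is $t=0$: here Slater's condition intervenes. If $y_0=(0,\xi_0)$ yielded $\tilde f(y_0)<0$ and $\tilde g(y_0)>0$, then considering $y_\lambda := (1,\bar x+\lambda \xi_0)$ and expanding $\tilde g(y_\lambda)$ and $\tilde f(y_\lambda)$ as polynomials in $\lambda$ shows that for large enough $\lambda$ one has $g(\bar x + \lambda \xi_0)>0$ but $f(\bar x + \lambda \xi_0)<0$, again contradicting the hypothesis. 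I expect this $t=0$ case to be the main obstacle, and Slater's condition is exactly what neutralizes it.

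\textbf{Separation and extraction of the multiplier.} Since $T$ is convex and disjoint from the open convex set $B:=\{(u,v) : u<0,\ v>0\}$, the separating hyperplane theorem yields $(\alpha_0,\alpha_1)\neq (0,0)$ and $c\in\mathbb{R}$ with $\alpha_0 u + \alpha_1 v \leq c$ on $B$ and $\alpha_0 u' + \alpha_1 v' \geq c$ on $T$. Letting $u\to -\infty$ with $v$ fixed (and symmetrically) forces $\alpha_0\geq 0$ and $\alpha_1\leq 0$; since $(0,0)\in T$ and $T$ is a cone, rescaling any $(u',v')\in T$ by $\lambda\to\infty$ upgrades the inequality to $\alpha_0 \tilde f(y) + \alpha_1 \tilde g(y) \geq 0$ for all $y\in\mathbb{R}^{n+1}$. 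Setting $y=(1,x)$ gives $\alpha_0 f(x) - |\alpha_1| g(x) \geq 0$ for all $x$. Finally, if $\alpha_0=0$ then evaluating at $x=\bar x$ would give $-|\alpha_1|g(\bar x)\geq 0$, forcing $\alpha_1=0$ and contradicting $(\alpha_0,\alpha_1)\neq 0$. Hence $\alpha_0>0$ and $\alpha := |\alpha_1|/\alpha_0 \geq 0$ is the desired multiplier.
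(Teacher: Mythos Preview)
Your argument is correct and follows the standard modern proof of Yakubovich's S-lemma: homogenize, invoke Dines' theorem to get convexity of the joint numerical range, show disjointness from the open ``bad'' quadrant, separate, and use the Slater point to normalize the multiplier. One minor remark on presentation: in the $t=0$ case you write that ``Slater's condition intervenes,'' but in fact any fixed base point would serve there, since the leading $\lambda^2$ coefficients $\xi_0^\top N_{22}\xi_0>0$ and $\xi_0^\top M_{22}\xi_0<0$ already force $g(\bar x+\lambda\xi_0)>0$ and $f(\bar x+\lambda\xi_0)<0$ for large $\lambda$. The Slater condition is genuinely indispensable only in your final paragraph, where it rules out $\alpha_0=0$.

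As for comparison with the paper: the paper does not prove this theorem. Theorem~\ref{Slemma} is stated purely as a recap of the classical result, with attribution to Yakubovich and a pointer to the survey of P\'olik and Terlaky; the paper's own contributions begin with the matrix-variable generalizations (Theorems~\ref{hommatSlemma}--\ref{strictmatSlemma2}), which take Theorems~\ref{Slemma} and~\ref{sSlemma} as black boxes. So there is no ``paper's own proof'' to compare against here --- your write-up simply supplies what the paper omits by citation, and does so along the lines one finds in the cited references.
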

We note that the functions $f$ and $g$ are not assumed to be convex. As such, it appears to be difficult to check the condition $f(x) \geq 0$ for all $x \in \mathbb{R}^n$ satisfying $g(x) \geq 0$. The importance of the S-lemma lies in the fact that the characterization \eqref{fg} of this condition is equivalent to a \emph{linear matrix inequality}
\begin{equation*}
    \begin{bmatrix}
     M_{11} & M_{12} \\ M_{12}^\top & M_{22}
     \end{bmatrix} - \alpha \begin{bmatrix}
     N_{11} & N_{12} \\ N_{12}^\top & N_{22}
     \end{bmatrix} \geq 0
\end{equation*}
in the scalar variable $\alpha \geq 0$. Here the matrices $N_{11} \in \mathbb{R}$, $N_{12} \in \mathbb{R}^{1 \times n}$ and $N_{22} \in \mathbb{R}^{n \times n}$ define the quadratic function $g$ analogous to \eqref{f(x)}. 

The scalar $\alpha$ is called a \emph{multiplier} and the assumption $g(\bar{x}) > 0$ for some $\bar{x} \in \mathbb{R}^n$ is often referred to as the \emph{Slater condition}. This assumption is necessary in the sense that Theorem~\ref{Slemma} is false without it. To show this by means of an example, one can take, e.g., $f(x) = x^\top A x$ and $g(x) = - x^\top B x$ with $A$ and $B$ as in the example of \cite[Page 4476]{Zi-zong2010}. A version of the S-lemma where $g$ satisfies a strict inequality has been presented in \cite[Thm. 7.8]{Polik2007}. We will reformulate the result in the following theorem. 

\begin{theorem}[Strict S-lemma]
\label{sSlemma}
Let $f,g : \mathbb{R}^n \to \mathbb{R}$ be quadratic forms. Suppose that there exists an $\bar{x} \in \mathbb{R}^n$ such that $g(\bar{x}) > 0$. Then $f(x) \geq 0$ for all $x \in \mathbb{R}^n$ such that $g(x) > 0$ if and only if there exists a scalar $\alpha \geq 0$ such that 
$$
f(x) - \alpha g(x) \geq 0 \quad \forall x \in \mathbb{R}^n.
$$
\end{theorem}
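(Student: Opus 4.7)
The $(\Leftarrow)$ direction is essentially free: if $f(x)-\alpha g(x)\geq 0$ everywhere and $\alpha\geq 0$, then for any $x$ with $g(x)>0$ we immediately get $f(x)\geq \alpha g(x)\geq 0$. So the interesting direction is $(\Rightarrow)$, and my plan is to reduce it to the non-strict S-lemma (Theorem \ref{Slemma}) already proven. To do that, I need to upgrade the hypothesis ``$f(x)\geq 0$ whenever $g(x)>0$'' to ``$f(x)\geq 0$ whenever $g(x)\geq 0$'', after which Theorem \ref{Slemma} applies directly (the Slater point $\bar x$ is the same in both statements).

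The upgrade step amounts to handling points on the boundary $\{g=0\}$. Writing $f(x)=x^\top M x$ and $g(x)=x^\top N x$ (using homogeneity), I plan the following: given any $x_0$ with $g(x_0)=0$, I would construct a sequence $y_k\to x_0$ with $g(y_k)>0$, then conclude $f(x_0)\geq 0$ from the hypothesis together with continuity of $f$. The natural candidate is a perturbation along the Slater direction,
\[
y_k \;=\; x_0 + \epsilon_k\,\bar x,
\]
so that
\[
g(y_k) \;=\; 2\epsilon_k\,\bar x^\top N x_0 \;+\; \epsilon_k^{2}\,g(\bar x).
\]

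The main (and only) obstacle is choosing $\epsilon_k$ so that $g(y_k)>0$ for all sufficiently large $k$. I would split into three cases according to the sign of $\bar x^\top N x_0$. If $\bar x^\top N x_0>0$, pick $\epsilon_k=1/k>0$ so the linear term dominates and is positive. If $\bar x^\top N x_0<0$, pick $\epsilon_k=-1/k<0$ for the same reason. If $\bar x^\top N x_0=0$, the linear term vanishes and the quadratic term $\epsilon_k^{2}\,g(\bar x)>0$ (using the Slater assumption $g(\bar x)>0$), so any nonzero $\epsilon_k\to 0$ works. In every case $y_k\to x_0$ with $g(y_k)>0$, hence $f(y_k)\geq 0$, and continuity gives $f(x_0)\geq 0$.

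With this extension in hand, $f(x)\geq 0$ on $\{x:g(x)\geq 0\}$, and since $g(\bar x)>0$ the Slater condition of Theorem \ref{Slemma} is met. Applying that theorem produces the desired $\alpha\geq 0$ with $f(x)-\alpha g(x)\geq 0$ for all $x\in\mathbb{R}^n$, completing the proof.
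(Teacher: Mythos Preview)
Your argument is correct. The perturbation $y_k = x_0 + \epsilon_k\bar x$ with the sign of $\epsilon_k$ chosen according to the sign of $\bar x^\top N x_0$ indeed guarantees $g(y_k)>0$ (in fact, in the first two cases both the linear and quadratic terms are positive, so no ``domination'' argument is even needed), and continuity of $f$ then closes the boundary case $g(x_0)=0$. The reduction to Theorem~\ref{Slemma} is legitimate since quadratic forms are a special case of quadratic functions.

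As for comparison: the paper does not give its own proof of Theorem~\ref{sSlemma}. It is quoted as a known result, a reformulation of \cite[Thm.~7.8]{Polik2007} (where it appears with two multipliers; here the Slater condition is used to collapse to a single $\alpha$). Your proof supplies a clean, self-contained derivation from the non-strict S-lemma and is exactly the kind of density/continuity argument one would expect for this passage from strict to non-strict constraint sets.
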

Note that Theorem~\ref{sSlemma} is stated with two multipliers in \cite[Thm. 7.8]{Polik2007}. However, the inclusion of the Slater condition allows us to state Theorem~\ref{sSlemma} with a single multiplier $\alpha$. 

\subsection{S-lemma with matrix variables}
Next, we aim at generalizing Theorems \ref{Slemma} and \ref{sSlemma} to quadratic functions of the form
\begin{equation}
\label{partitionMN}
\begin{bmatrix}
I \\ X
\end{bmatrix}^\top \begin{bmatrix}
M_{11} & M_{12} \\ M_{12}^\top & M_{22}
\end{bmatrix} 
\begin{bmatrix}
I \\ X
\end{bmatrix} \text{and } \begin{bmatrix}
I \\ X
\end{bmatrix}^\top \begin{bmatrix}
N_{11} & N_{12} \\ N_{12}^\top & N_{22}
\end{bmatrix} 
\begin{bmatrix}
I \\ X
\end{bmatrix},
\end{equation}
where $X \in \mathbb{R}^{n \times k}$ is a \emph{matrix variable}, and the partitioned matrices $M,N \in \mathbb{R}^{(k+n) \times (k+n)}$ are real and symmetric. As our first step, the following theorem provides an S-lemma for homogeneous quadratic functions of the form $X^\top M X$ and $X^\top N X$. Naturally, instead of the non-negativity of functions in the classical S-lemma, we now consider the positive (semi)definiteness of quadratic functions of matrix variables. 
\begin{comment}
\begin{theorem}
Let $M,N \in \mathbb{R}^{n \times n}$ be symmetric matrices and assume that $x^\top N x > 0$ for some $x \in \mathbb{R}^n$. Then $X^\top M X \geq 0$ for all $X \in \mathbb{R}^{n \times k}$ such that $X^T N X \geq 0$ if and only if there exists a scalar $\alpha \geq 0$ such that $M - \alpha N \geq 0$.
\end{theorem}

\begin{proof}
The `if' statement is obvious. To prove the `only if' direction, suppose that $X^\top M X \geq 0$ for all $X \in \mathbb{R}^{n \times k}$ such that $X^T N X \geq 0$.

Let $x \in \mathbb{R}^n$ be such that $x^\top N x \geq 0$. Define $X := x v^\top$ where $v \in \mathbb{R}^k$ is any vector with $\norm{v} = 1$. Clearly, we have
$$
X^\top N X = (x^\top N x) v v^\top \geq 0.
$$
By hypothesis, this implies that $X^\top M X \geq 0$. Multiplication of the latter inequality from left by $v^\top$ and right by $v$ yields $x^\top M x \geq 0$. Therefore, we conclude that $x^\top M x \geq 0$ for all $x \in \mathbb{R}^n$ such that $x^\top N x \geq 0$. By Theorem~\ref{Slemma}, this implies that there exists a scalar $\alpha \geq 0$ such that $x^\top M x - \alpha x^\top N x \geq 0$ for all $x \in \mathbb{R}^n$. We conclude that $M - \alpha N \geq 0$, which proves the theorem. 
\end{proof}
\end{comment}

\begin{theorem}[Homogeneous matrix S-lemma]
\label{hommatSlemma}
Let $M,N \in \mathbb{R}^{n \times n}$ be symmetric matrices and assume that $\bar{X}^\top N \bar{X} > 0$ for some $\bar{X} \in \mathbb{R}^{n \times k}$. The following statements are equivalent:
\begin{enumerate}[(i)]
    \item $X^\top M X \geq 0$ for all $X \in \mathbb{R}^{n \times k}$ such that $X^\top N X \geq 0$.\label{homi}
    \item $X^\top M X \geq 0$ for all $X \in \mathbb{R}^{n \times k}$ such that $X^\top N X > 0$.\label{homii}
    \item There exists a scalar $\alpha \geq 0$ such that $M - \alpha N \geq 0$.\label{homiii}
\end{enumerate}
\end{theorem}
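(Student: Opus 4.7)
I will use the cyclic implication chain (iii)$\Rightarrow$(i)$\Rightarrow$(ii)$\Rightarrow$(iii). The first two links are one-line verifications: if $M - \alpha N \geq 0$ with $\alpha \geq 0$ and $X^\top N X \geq 0$, then $X^\top M X = X^\top(M-\alpha N)X + \alpha X^\top N X \geq 0$, giving (iii)$\Rightarrow$(i); and (i)$\Rightarrow$(ii) is immediate since $X^\top N X > 0$ in particular implies $X^\top N X \geq 0$.

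The substantive step is (ii)$\Rightarrow$(iii), and my plan here is to reduce it to the vector-valued strict S-lemma (Theorem~\ref{sSlemma}) applied to $f(x)=x^\top M x$ and $g(x)=x^\top N x$. Its Slater condition is immediate, since any column $\bar x$ of $\bar X$ satisfies $\bar x^\top N \bar x > 0$ as a diagonal entry of $\bar X^\top N \bar X > 0$. Hence it suffices to prove that (ii) implies $x^\top M x \geq 0$ for every $x \in \mathbb{R}^n$ with $x^\top N x > 0$. Fixing such an $x$, the idea is to construct an augmented matrix $X = [x \; X_2] \in \mathbb{R}^{n \times k}$ with $X^\top N X > 0$; then (ii) applied to $X$ yields $X^\top M X \geq 0$, whose $(1,1)$ block is exactly $x^\top M x \geq 0$. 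By a Schur-complement calculation against the $(1,1)$ entry $x^\top N x > 0$, the condition $X^\top N X > 0$ is equivalent to $X_2^\top \tilde N X_2 > 0$, where $\tilde N := N - Nx(x^\top N x)^{-1}x^\top N$.

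Thus everything boils down to producing $X_2 \in \mathbb{R}^{n\times(k-1)}$ with $X_2^\top \tilde N X_2 > 0$. This is where the hypothesis $\bar X^\top N \bar X > 0$ with $\bar X \in \mathbb{R}^{n \times k}$ becomes crucial: it forces $N$ to be positive definite on the $k$-dimensional subspace $\im \bar X$, so $N$ has at least $k$ strictly positive eigenvalues. Since $\tilde N$ is obtained from $N$ by subtracting a rank-one positive semidefinite matrix, the Cauchy interlacing inequality for rank-one perturbations, $\lambda_{i+1}(N) \leq \lambda_i(\tilde N) \leq \lambda_i(N)$, implies that $\tilde N$ has at least $k-1$ strictly positive eigenvalues. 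Taking the columns of $X_2$ to be orthonormal eigenvectors of $\tilde N$ for these positive eigenvalues completes the construction, after which Theorem~\ref{sSlemma} delivers the required multiplier $\alpha \geq 0$ with $M - \alpha N \geq 0$. I expect the main obstacle to be precisely this eigenvalue-counting step, as the rest is essentially bookkeeping: it is what converts the matrix-variable assumption $\bar X^\top N \bar X > 0$ into the vector-variable Slater hypothesis demanded by the classical S-lemma.
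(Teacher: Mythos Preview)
Your proof is correct, and it takes a genuinely different route from the paper's argument, though both ultimately reduce (ii)$\Rightarrow$(iii) to the vector strict S-lemma (Theorem~\ref{sSlemma}). The paper does not build $X=[x\;X_2]$ via spectral analysis; instead it perturbs $\bar X$ by a rank-one term, setting $X=\epsilon\bar X + xv^\top$ for a unit vector $v$ and small $\epsilon\neq 0$, checks by a two-case argument on $v^\top y$ that $X^\top N X>0$ for $\epsilon$ small, applies (ii), multiplies $X^\top M X\geq 0$ on left and right by $v^\top$ and $v$, and lets $\epsilon\to 0$ to conclude $x^\top M x\geq 0$. Your construction is more algebraic and avoids the limiting step entirely: the Schur-complement reduction to $X_2^\top\tilde N X_2>0$ is clean, and the Cauchy interlacing bound $\lambda_i(\tilde N)\geq\lambda_{i+1}(N)$ makes transparent exactly how the matrix Slater hypothesis (namely that $N$ has at least $k$ positive eigenvalues) is consumed. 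The paper's approach, in exchange, is more elementary in that it needs no eigenvalue machinery---just continuity---and keeps $\bar X$ explicitly in the construction rather than passing through the spectrum of $N$. Both arguments are short; yours arguably exposes the ``inertia bookkeeping'' more clearly, while the paper's is self-contained without appeal to interlacing.
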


\begin{remark}
The assumption on the existence of $\bar{X}$ such that $\bar{X}^\top N \bar{X} > 0$ is a natural generalization of the Slater condition in Theorems \ref{Slemma} and \ref{sSlemma}. The assumption is again necessary in the sense that Theorem~\ref{hommatSlemma} is false without it. Nonetheless, it can be shown that the assumption can be weakened if one is interested only in the equivalence of \eqref{homi} and \eqref{homiii}. In fact, one can show using similar arguments as in the proof of Theorem~\ref{hommatSlemma} that \eqref{homi} $\iff$ \eqref{homiii} under the assumption that $\exists \bar{x} \in \mathbb{R}^n$ such that $\bar{x}^\top N \bar{x} > 0$, i.e., under the ``standard" Slater condition. 
\end{remark}

\begin{proof}[Proof of Theorem~\ref{hommatSlemma}]
It is clear that (\ref{homi}) $\implies$ (\ref{homii}) and (\ref{homiii}) $\implies$ (\ref{homi}). As such, it suffices to prove the implication (\ref{homii}) $\implies$ (\ref{homiii}). To this end, suppose that (\ref{homii}) holds. Let $x \in \mathbb{R}^n$ be such that $x^\top N x > 0$. We want to prove that $x^\top M x \geq 0$ so that we can apply Theorem~\ref{sSlemma}. Choose a vector $v \in \mathbb{R}^k$ such that $\norm{v} = 1$. Next, we define the matrix $X \in \mathbb{R}^{n \times k}$ as $X := \epsilon \bar{X} + x v^\top$ for $\epsilon \neq 0$. Clearly, $X^\top N X$ is equal to
$$
\epsilon^2 \bar{X}^\top N \bar{X} + \epsilon \left(\bar{X}^\top N x v^\top + v x^\top N \bar{X}\right) + (x^\top N x) v v^\top.
$$
We claim that $X^\top N X$ is positive definite for $\epsilon$ sufficiently small. To prove this claim, first suppose that $y \in \mathbb{R}^k$ is nonzero and $v^\top y = 0$. Then we obtain
$$
y^\top X^\top N X y = \epsilon^2 y^\top \bar{X}^\top N \bar{X} y > 0.
$$
Secondly, suppose that $y \in \mathbb{R}^k$ is nonzero and $v^\top y =: \beta \neq 0$. Then $y^\top X^\top N X y$ is equal to 
$$
 y^\top \left( \epsilon^2 \bar{X}^\top N \bar{X} + \epsilon \left(\bar{X}^\top N x v^\top + v x^\top N \bar{X}\right) \right) y + (x^\top N x) \beta^2,
$$
which is positive for $\epsilon$ sufficiently small since $\beta \neq 0$ and $x^\top N x > 0$. We conclude that $X^\top N X >0$ for $\epsilon$ sufficiently small. Now, by (\ref{homii}) we conclude that $X^\top M X \geq 0$.  Multiplication of the latter inequality from left by $v^\top$ and right by $v$ yields the inequality
\begin{equation}
    \label{ineqM}
\epsilon^2 v^\top \bar{X}^\top M \bar{X} v + \epsilon \left( v^\top \bar{X}^\top M x + x^\top M \bar{X} v \right) + x^\top M x \geq 0.
\end{equation}
This implies that $x^\top M x \geq 0$. Indeed, if $x^\top M x < 0$ then there exists a sufficiently small $\epsilon \neq 0$ such that 
$$
\epsilon^2 v^\top \bar{X}^\top M \bar{X} v + \epsilon \left( v^\top \bar{X}^\top M x + x^\top M \bar{X} v \right) + x^\top M x < 0,
$$
which contradicts \eqref{ineqM}. To conclude, we have shown that $x^\top M x \geq 0$ for all $x \in \mathbb{R}^n$ such that $x^\top N x > 0$. By Theorem~\ref{sSlemma}, the condition (\ref{homiii}) is satisfied. This proves the theorem. 
\end{proof}

Next, we build on Theorem~\ref{hommatSlemma} by introducing a general (inhomogeneous) S-lemma with matrix variables. The following theorem is one of the main results of this section. 

\begin{theorem}[Matrix S-lemma]
\label{matSlemma}
Let $M,N \in \mathbb{R}^{(k+n) \times (k+n)}$ be symmetric matrices and assume that there exists some matrix $\bar{Z} \in \mathbb{R}^{n \times k}$ such that
\begin{equation}
    \label{matSlater}
\begin{bmatrix} I \\ \bar{Z} \end{bmatrix}^\top N \begin{bmatrix} I \\ \bar{Z} \end{bmatrix} > 0.
\end{equation}
Then the following statements are equivalent:
\begin{enumerate}[(I)]
    \item 
    $
    \begin{bmatrix} I \\ Z \end{bmatrix}^\top M \begin{bmatrix} I \\ Z \end{bmatrix} \geq 0 \:\: \forall  Z \in \mathbb{R}^{n \times k} \text{ with} \begin{bmatrix} I \\ Z \end{bmatrix}^\top N \begin{bmatrix} I \\ Z \end{bmatrix} \geq 0.
    $ \label{mati}
    \item 
    $
    \begin{bmatrix} I \\ Z \end{bmatrix}^\top M \begin{bmatrix} I \\ Z \end{bmatrix} \geq 0 \:\: \forall  Z \in \mathbb{R}^{n \times k} \text{ with} \begin{bmatrix} I \\ Z \end{bmatrix}^\top N \begin{bmatrix} I \\ Z \end{bmatrix} > 0.
    $ \label{matii}
    \item There exists a scalar $\alpha \geq 0$ such that $M - \alpha N \geq 0$.\label{matiii}
\end{enumerate}
\end{theorem}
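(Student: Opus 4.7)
The plan is to reduce the inhomogeneous matrix S-lemma to the homogeneous matrix S-lemma (Theorem~\ref{hommatSlemma}) via a homogenization trick. I would establish the equivalence through the cycle (\ref{matiii}) $\Rightarrow$ (\ref{mati}) $\Rightarrow$ (\ref{matii}) $\Rightarrow$ (\ref{matiii}). The first two implications are routine: (\ref{matiii}) $\Rightarrow$ (\ref{mati}) follows by conjugating $M - \alpha N \geq 0$ with $\begin{bmatrix} I \\ Z \end{bmatrix}$ and using $\alpha \geq 0$, while (\ref{mati}) $\Rightarrow$ (\ref{matii}) is trivial since the set of $Z$ feasible for (\ref{matii}) is contained in that of (\ref{mati}).

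The substantive implication is (\ref{matii}) $\Rightarrow$ (\ref{matiii}). Here I would apply Theorem~\ref{hommatSlemma} to the pair $(M,N)$ viewed as symmetric matrices in $\mathbb{R}^{(k+n) \times (k+n)}$, with matrix variable $W \in \mathbb{R}^{(k+n) \times k}$. The Slater-type hypothesis of Theorem~\ref{hommatSlemma} is supplied by $\bar{W} := \begin{bmatrix} I \\ \bar{Z} \end{bmatrix}$, which satisfies $\bar{W}^\top N \bar{W} > 0$ by \eqref{matSlater}. It therefore suffices to prove that $W^\top M W \geq 0$ for every $W \in \mathbb{R}^{(k+n) \times k}$ with $W^\top N W > 0$; invoking the equivalence of items (\ref{homii}) and (\ref{homiii}) of Theorem~\ref{hommatSlemma} then produces the required multiplier $\alpha \geq 0$ with $M - \alpha N \geq 0$.

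To verify this hypothesis, I would partition any such $W$ as $W = \begin{bmatrix} Y \\ X \end{bmatrix}$ with $Y \in \mathbb{R}^{k \times k}$ and $X \in \mathbb{R}^{n \times k}$. When $Y$ is invertible, setting $Z := X Y^{-1}$ gives $W = \begin{bmatrix} I \\ Z \end{bmatrix} Y$, so that $W^\top N W = Y^\top \begin{bmatrix} I \\ Z \end{bmatrix}^\top N \begin{bmatrix} I \\ Z \end{bmatrix} Y$ and analogously for $M$. The strict positivity of $W^\top N W$ transfers to $\begin{bmatrix} I \\ Z \end{bmatrix}^\top N \begin{bmatrix} I \\ Z \end{bmatrix}$; hypothesis (\ref{matii}) then yields $\begin{bmatrix} I \\ Z \end{bmatrix}^\top M \begin{bmatrix} I \\ Z \end{bmatrix} \geq 0$, and conjugating back by $Y$ delivers $W^\top M W \geq 0$.

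The main obstacle is the case in which $Y$ is singular, where the substitution $Z = X Y^{-1}$ is unavailable. I would resolve this by a continuity argument: replace $W$ by $W_\epsilon := \begin{bmatrix} Y + \epsilon I \\ X \end{bmatrix}$, whose top block is invertible for all but finitely many $\epsilon$. Because strict positive definiteness is an open condition, $W_\epsilon^\top N W_\epsilon > 0$ continues to hold for all sufficiently small $\epsilon \neq 0$; the invertible case then yields $W_\epsilon^\top M W_\epsilon \geq 0$, and passing to the limit $\epsilon \to 0$ gives $W^\top M W \geq 0$, completing the reduction to Theorem~\ref{hommatSlemma}. The crucial use of the strict inequality in (\ref{matii}) is precisely at this perturbation step, which is exactly why the passage through the strict form is the right way to route the argument.
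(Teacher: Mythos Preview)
Your proposal is correct and follows essentially the same route as the paper: both reduce (\ref{matii}) $\Rightarrow$ (\ref{matiii}) to statement (\ref{homii}) of Theorem~\ref{hommatSlemma} by partitioning $W = \begin{bmatrix} Y \\ X \end{bmatrix}$, handling invertible $Y$ via $Z = XY^{-1}$, and treating singular $Y$ through the perturbation $Y \mapsto Y + \epsilon I$ followed by a limit. The only cosmetic difference is notation and that the paper restricts to $\epsilon > 0$ while you allow $\epsilon \neq 0$; neither affects the argument.
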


%To lighten the notation in the proof of Theorem~\ref{matSlemma} we introduce some notation. For any symmetric matrix $M \in \mathbb{R}^{n \times n}$ we define the set
%\begin{equation}
%    \label{SM}
%\mathcal{S}_M := \left\{ Z \in \mathbb{R}^{m \times k} \mid \begin{bmatrix}
%I \\ Z
%\end{bmatrix}^\top M \begin{bmatrix}
%I \\ Z
%\end{bmatrix} \geq 0 \right\}.
%\end{equation}
%The set $\mathcal{S}_M^+$ is defined analogous to \eqref{SM} by replacing `$\geq$' by `$>$'. Note that the statements (\ref{mati}) and (\ref{matii}) of Theorem~\ref{matSlemma} are equivalent to the set inclusions $\mathcal{S}_N \subseteq \mathcal{S}_M$ and $\mathcal{S}_N^+ \subseteq \mathcal{S}_M$, respectively. To prove Theorem~\ref{matSlemma} we need a few auxiliary results that we have stated and proven in the appendix. We will thus refer to the appendix when necessary. 
Note that for $k = 1$, the assumption \eqref{matSlater} reduces to the standard Slater condition. In this case, Theorem~\ref{matSlemma} recovers Theorems \ref{Slemma} and \ref{sSlemma} in the following sense: the equivalence of \eqref{mati} and \eqref{matiii} is the statement of Theorem~\ref{Slemma}. The equivalence of \eqref{matii} and \eqref{matiii} generalizes Theorem~\ref{sSlemma} for quadratic forms to general quadratic functions.

\begin{proof}[Proof of Theorem~\ref{matSlemma}] 
Clearly, \eqref{mati} $\implies$ \eqref{matii} and \eqref{matiii} $\implies$ \eqref{mati}. Thus, it suffices to prove that \eqref{matii} $\implies$ \eqref{matiii}. Our strategy will be to show that \eqref{matii} implies statement \eqref{homii} of Theorem~\ref{hommatSlemma}. To this end, suppose that \eqref{matii} holds and let $X \in \mathbb{R}^{(k+n) \times k}$ be such that $X^\top N X > 0$. Partition $X$ as 
$$
X = \begin{bmatrix}
X_1 \\ X_2
\end{bmatrix},
$$
where $X_1 \in \mathbb{R}^{k \times k}$ and $X_2 \in \mathbb{R}^{n \times k}$. Clearly, for all sufficiently small $\epsilon > 0$ we have
$$
\begin{bmatrix}
X_1 + \epsilon I \\ X_2
\end{bmatrix}^\top N \begin{bmatrix}
X_1 + \epsilon I \\ X_2
\end{bmatrix} > 0.
$$
Also note that $X_1 + \epsilon I$ is nonsingular for all sufficiently small $\epsilon > 0$. This implies that 
$$
\begin{bmatrix}
I \\ X_2 (X_1 + \epsilon I)^{-1}
\end{bmatrix}^\top N \begin{bmatrix}
I \\ X_2 (X_1 + \epsilon I)^{-1}
\end{bmatrix} > 0.
$$
By \eqref{matii}, we have
$$
\begin{bmatrix}
I \\ X_2 (X_1 + \epsilon I)^{-1}
\end{bmatrix}^\top M \begin{bmatrix}
I \\ X_2 (X_1 + \epsilon I)^{-1}
\end{bmatrix} \geq 0,
$$
equivalently,
$$
\begin{bmatrix}
X_1 + \epsilon I \\ X_2
\end{bmatrix}^\top M \begin{bmatrix}
X_1 + \epsilon I \\ X_2
\end{bmatrix} \geq 0
$$
for all $\epsilon > 0$ sufficiently small. By taking the limit $\epsilon \downarrow 0$ we conclude that $X^\top M X \geq 0$. Therefore, statement \eqref{homii} (equivalently, statement \eqref{homiii}) of Theorem~\ref{hommatSlemma} is satisfied. This means that \eqref{matiii} holds, which proves the theorem.
\end{proof}

As a special case of Theorem~\ref{matSlemma} we recover Theorem 3.3 of \cite{Luo2004}.
\begin{corollary}
The quadratic matrix inequality $M_{11} + M_{12} Z + Z^\top M_{12}^\top + Z^\top M_{22} Z \geq 0$ holds for all $Z \in \mathbb{R}^{n \times k}$ satisfying $I - Z^\top D Z \geq 0$ if and only if there exists a scalar $\alpha \geq 0$ such that 
$$
\begin{bmatrix}
M_{11} & M_{12} \\ M_{12}^\top & M_{22}
\end{bmatrix} - \alpha \begin{bmatrix}
I & 0 \\ 0 & -D
\end{bmatrix} \geq 0.
$$
\end{corollary}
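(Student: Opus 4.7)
The plan is to deduce this corollary directly from Theorem~\ref{matSlemma} by choosing the matrices $M$ and $N$ in the right way. Specifically, I would set
\[
M := \begin{bmatrix} M_{11} & M_{12} \\ M_{12}^\top & M_{22} \end{bmatrix}, \qquad N := \begin{bmatrix} I & 0 \\ 0 & -D \end{bmatrix},
\]
both regarded as symmetric matrices in $\mathbb{R}^{(k+n)\times(k+n)}$.

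Next I would verify that these choices translate the two quadratic matrix inequalities in the corollary into the canonical form used in Theorem~\ref{matSlemma}. A direct expansion gives
\[
\begin{bmatrix} I \\ Z \end{bmatrix}^\top M \begin{bmatrix} I \\ Z \end{bmatrix} = M_{11} + M_{12} Z + Z^\top M_{12}^\top + Z^\top M_{22} Z,
\]
and similarly
\[
\begin{bmatrix} I \\ Z \end{bmatrix}^\top N \begin{bmatrix} I \\ Z \end{bmatrix} = I - Z^\top D Z,
\]
so the hypothesis of the corollary is exactly statement~\eqref{mati} of Theorem~\ref{matSlemma} and the conclusion is exactly statement~\eqref{matiii}.

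The only remaining ingredient is the Slater-type condition~\eqref{matSlater}. Here the verification is trivial: taking $\bar Z = 0$ yields $\begin{bmatrix} I \\ 0 \end{bmatrix}^\top N \begin{bmatrix} I \\ 0 \end{bmatrix} = I > 0$, so \eqref{matSlater} is automatically satisfied regardless of $D$. Invoking the equivalence \eqref{mati}~$\Leftrightarrow$~\eqref{matiii} of Theorem~\ref{matSlemma} then completes the proof. There is no real obstacle: the entire argument is a matter of recognizing that the corollary is the specialization of Theorem~\ref{matSlemma} to the particular $N$ above, for which the Slater condition is free.
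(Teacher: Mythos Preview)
Your proof is correct and matches the paper's own argument essentially verbatim: the paper also observes that the Slater condition \eqref{matSlater} holds for $\bar Z = 0$ and then invokes Theorem~\ref{matSlemma}.
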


\begin{proof}
Note that the generalized Slater condition \eqref{matSlater} is satisfied (one can choose e.g., $\bar{Z} = 0$). Thus, the statement follows from Theorem~\ref{matSlemma}.
\end{proof}

Theorem~\ref{matSlemma} provides a natural generalization of the S-lemma to matrix variables. However, note that for the application that we have in mind, we need a slightly different version of the theorem. Indeed, note that in the data-driven context of Section~\ref{sectionproblem}, a \emph{strict} inequality \eqref{ineqABPK} must hold for all $(A,B)$ satisfying a non-strict inequality \eqref{ineqAB}. As such, we need to extend Theorem~\ref{matSlemma} to the case when the inequality involving $M$ is \emph{strict}. Before we do so we introduce the shorthand notation
$$
\mathcal{S}_N := \left\{ Z \in \mathbb{R}^{n \times k} \mid \begin{bmatrix} I \\ Z \end{bmatrix}^\top N \begin{bmatrix} I \\ Z \end{bmatrix} \geq 0 \right\}.
$$

\begin{theorem}[Strict matrix S-lemma]
\label{strictmatSlemma}
Let $M$ and $N$ by symmetric matrices in  $\mathbb{R}^{(k+n) \times (k+n)}$. Assume that $\mathcal{S}_N$ is bounded and that there exists some matrix $\bar{Z} \in \mathbb{R}^{n \times k}$ satisfying \eqref{matSlater}. Then we have that
    \begin{equation}
        \label{strictM}
    \begin{bmatrix} I \\ Z \end{bmatrix}^\top M \begin{bmatrix} I \\ Z \end{bmatrix} > 0 \: \text{ for all }  Z \in \mathcal{S}_N
    \end{equation}
    if and only if there exists $\alpha \geq 0$ such that $M- \alpha N > 0$.
\end{theorem}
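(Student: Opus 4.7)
The ``if'' direction is immediate: if $M-\alpha N>0$ for some $\alpha\geq 0$, then since $\begin{bmatrix} I \\ Z \end{bmatrix}$ has full column rank, the congruence $\begin{bmatrix} I \\ Z \end{bmatrix}^\top (M-\alpha N)\begin{bmatrix} I \\ Z \end{bmatrix}$ is positive definite, and for $Z\in\mathcal{S}_N$ the additional PSD term $\alpha\begin{bmatrix} I \\ Z \end{bmatrix}^\top N\begin{bmatrix} I \\ Z \end{bmatrix}$ preserves strict positivity, yielding \eqref{strictM}.

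For the ``only if'' direction my plan is to reduce to the already-established non-strict matrix S-lemma (Theorem~\ref{matSlemma}) via a small perturbation of $M$. The key observation is that $\mathcal{S}_N$ is closed (defined by a continuous inequality) and bounded by hypothesis, hence compact. Since $Z\mapsto \begin{bmatrix} I \\ Z \end{bmatrix}^\top M\begin{bmatrix} I \\ Z \end{bmatrix}$ is continuous and, by \eqref{strictM}, strictly positive definite on $\mathcal{S}_N$, its smallest eigenvalue is a continuous function attaining a positive minimum on the compact set $\mathcal{S}_N$. Thus there exists $\epsilon>0$ such that $\begin{bmatrix} I \\ Z \end{bmatrix}^\top M\begin{bmatrix} I \\ Z \end{bmatrix}\geq \epsilon I$ for every $Z\in\mathcal{S}_N$. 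By the same compactness argument, $\norm{I+Z^\top Z}$ is uniformly bounded above by some $C>0$ on $\mathcal{S}_N$, i.e.\ $I+Z^\top Z\leq CI$.

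Next I would choose any $\delta\in(0,\epsilon/C]$ and consider the perturbed matrix $M_\delta:=M-\delta I$. For every $Z\in\mathcal{S}_N$ a direct computation gives
$$
\begin{bmatrix} I \\ Z \end{bmatrix}^\top M_\delta \begin{bmatrix} I \\ Z \end{bmatrix}=\begin{bmatrix} I \\ Z \end{bmatrix}^\top M \begin{bmatrix} I \\ Z \end{bmatrix}-\delta(I+Z^\top Z)\geq (\epsilon-\delta C)\,I\geq 0.
$$
The Slater-type hypothesis \eqref{matSlater} involves only $N$ and is therefore unaffected by replacing $M$ by $M_\delta$. Applying Theorem~\ref{matSlemma} to the pair $(M_\delta,N)$ then supplies a scalar $\alpha\geq 0$ with $M_\delta-\alpha N\geq 0$, which rearranges to $M-\alpha N\geq \delta I>0$, exactly the desired strict inequality.

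The only delicate point is the existence of the uniform positive lower bound $\epsilon$, which is precisely where the boundedness assumption on $\mathcal{S}_N$ enters in an essential way; without compactness one could only conclude a pointwise strict bound and the reduction to Theorem~\ref{matSlemma} would break down. After this step the argument is a routine application of the non-strict matrix S-lemma together with a controlled perturbation.
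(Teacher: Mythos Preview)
Your proof is correct and follows essentially the same approach as the paper: perturb $M$ to $M-\delta I$, use compactness of $\mathcal{S}_N$ to ensure the perturbed quadratic form remains positive semidefinite on $\mathcal{S}_N$, and then invoke Theorem~\ref{matSlemma}. The only cosmetic difference is that the paper establishes the existence of a valid perturbation via a Bolzano--Weierstrass sequential contradiction argument, whereas you use the extreme value theorem directly (bounding the minimum eigenvalue below and $\norm{I+Z^\top Z}$ above); both are standard ways of exploiting compactness and lead to the same conclusion.
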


\begin{proof}
The ``if" part is clear, so we focus on proving the ``only if" part. Suppose that \eqref{strictM} holds. We claim that there exists an $\epsilon > 0$ such that 
\begin{equation}
    \label{Mepsilon}
\begin{bmatrix} I \\ Z \end{bmatrix}^\top (M-\epsilon I) \begin{bmatrix} I \\ Z \end{bmatrix} > 0 \: \: \text{for all } Z \in \mathcal{S}_N.
\end{equation}
Suppose that this is not the case. Then there exists a sequence $\{\epsilon_i\}$ with $\epsilon_i \to 0$ as $i \to \infty$ with the property that for each $i$ there exists $Z_i \in \mathcal{S}_N$ such that
$$
\begin{bmatrix} I \\ Z_i \end{bmatrix}^\top (M-\epsilon_i I) \begin{bmatrix} I \\ Z_i \end{bmatrix} \not > 0.
$$
Since $\mathcal{S}_N$ is bounded, the sequence $\{Z_i\}$ is bounded. As such, by the Bolzano-Weierstrass theorem, it contains a converging subsequence with limit, say, $Z^*$. We conclude that
$$
\begin{bmatrix} I \\ Z^* \end{bmatrix}^\top M \begin{bmatrix} I \\ Z^* \end{bmatrix} \not > 0.
$$
Note that $\mathcal{S}_N$ is closed and thus $Z^* \in \mathcal{S}_N$. Since \eqref{strictM} holds we arrive at a contradiction. Therefore, we conclude that there exists an $\epsilon > 0$ such that \eqref{Mepsilon} holds. In particular, this implies the existence of $\epsilon > 0$ such that  
$$
\begin{bmatrix} I \\ Z \end{bmatrix}^\top (M-\epsilon I) \begin{bmatrix} I \\ Z \end{bmatrix} \geq 0 \: \: \text{for all } Z \in \mathcal{S}_N.
$$
Now, by Theorem~\ref{matSlemma} there exists an $\alpha \geq 0$ such that 
$$
(M-\epsilon I) -\alpha N \geq 0.
$$
We conclude that $M-\alpha N > 0$ which proves the theorem.
\end{proof}

At this point, it is worthwhile to point out a relation between Theorem~\ref{strictmatSlemma} and the literature on LMI relaxations in robust control, see \cite{Scherer2005,Scherer2006,Scherer2006b}. In fact, we can derive a type of matrix S-lemma from the general theory in \cite{Scherer2005}. As it turns out, this matrix S-lemma is also a corollary of Theorem \ref{strictmatSlemma}. To obtain the result, we substitute $A = 0$, $B = I$, $W(x) = -M$ and 
$${\bf\Delta} = \left\{\Delta \mid \begin{bmatrix}
	\Delta \\ I 
	\end{bmatrix}^\top N \begin{bmatrix}
	\Delta \\ I 
	\end{bmatrix}\geq 0\right\},
$$ 
into Equation 1.2 of \cite{Scherer2005}. Then, we combine the full block S-procedure (c.f. \cite{Scherer2001},\cite[page 367]{Scherer2005}) with the fact that the LMI relaxation of \cite{Scherer2005} is exact for a single full block \cite[Thm. 5.3]{Scherer2005}. This yields the following result. 
\begin{corollary}
\label{corollaryScherer}
Let $M,N \in \mathbb{R}^{(k+n) \times (k+n)}$ be symmetric matrices, partitioned as in \eqref{partitionMN}. Assume that $N$ is nonsingular, $N_{11} \geq 0$ and $N_{22} < 0$. Then we have that
\begin{equation*}
\begin{bmatrix} I \\ Z \end{bmatrix}^\top M \begin{bmatrix} I \\ Z \end{bmatrix} > 0 \: \text{ for all }  Z \in \mathcal{S}_N
\end{equation*}
if and only if there exists $\alpha \geq 0$ such that $M - \alpha N > 0$.
\end{corollary}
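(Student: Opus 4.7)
The plan is to reduce Corollary~\ref{corollaryScherer} to the strict matrix S-lemma (Theorem~\ref{strictmatSlemma}). That theorem requires two hypotheses on $N$: (a) $\mathcal{S}_N$ is bounded, and (b) the matrix Slater condition \eqref{matSlater} holds. Therefore the task splits into verifying (a) and (b) from the assumptions that $N$ is nonsingular, $N_{11} \geq 0$, and $N_{22} < 0$, after which Theorem~\ref{strictmatSlemma} applies verbatim.

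For the boundedness of $\mathcal{S}_N$, the key observation is that $N_{22} < 0$ allows completion of the square: one rewrites the quadratic form as
\[
(Z + N_{22}^{-1} N_{12}^\top)^\top N_{22} (Z + N_{22}^{-1} N_{12}^\top) + S \geq 0,
\]
where $S := N_{11} - N_{12} N_{22}^{-1} N_{12}^\top$ is the Schur complement of $N_{22}$ in $N$. Since $-N_{22} > 0$ and $S$ is fixed, this forces $Z + N_{22}^{-1} N_{12}^\top$ to lie in a bounded ellipsoid (for instance by taking traces), so $\mathcal{S}_N$ is bounded.

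For the matrix Slater condition, I would not try $\bar{Z} = 0$, since that only yields $N_{11} \geq 0$. Instead I would choose $\bar{Z} = -N_{22}^{-1} N_{12}^\top$ and substitute into $\begin{bmatrix} I\\ \bar Z\end{bmatrix}^\top N\begin{bmatrix} I\\ \bar Z\end{bmatrix}$; a direct expansion collapses the cross terms and leaves exactly $S = N_{11} - N_{12}N_{22}^{-1}N_{12}^\top$. It remains to argue $S > 0$. Here I would invoke the Schur congruence: since $N_{22}$ is invertible, $N$ is congruent to the block-diagonal matrix $\mathrm{diag}(S, N_{22})$. Nonsingularity of $N$ forces $S$ to be nonsingular. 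At the same time $N_{11} \geq 0$ together with $-N_{22}^{-1} > 0$ gives $S \geq 0$, and a nonsingular positive semidefinite matrix is positive definite. Hence $S > 0$ and the Slater condition \eqref{matSlater} is verified.

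The main subtlety, and the step requiring the full strength of the three hypotheses, is precisely this Schur-complement argument for the Slater condition: neither $N_{11} \geq 0$ alone, nor $N_{22} < 0$ alone, suffices, and the nonsingularity of $N$ is what upgrades the semidefiniteness of $S$ to definiteness. Once both hypotheses of Theorem~\ref{strictmatSlemma} are in place, the equivalence between \eqref{strictM} and the existence of $\alpha \geq 0$ with $M - \alpha N > 0$ is immediate, completing the proof.
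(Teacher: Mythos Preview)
Your proof is correct and follows essentially the same approach as the paper: reduce to Theorem~\ref{strictmatSlemma} by deriving boundedness of $\mathcal{S}_N$ from $N_{22}<0$ and verifying the Slater condition via $\bar{Z}=-N_{22}^{-1}N_{12}^\top$ together with the Schur-complement argument that $S=N_{11}-N_{12}N_{22}^{-1}N_{12}^\top$ is nonsingular and positive semidefinite, hence positive definite. You have simply spelled out the completion-of-the-square and congruence steps in more detail than the paper does.
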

\begin{proof}
We will show that the assumptions on $N$ imply the assumptions of Theorem \ref{strictmatSlemma}. First of all, $N_{22} < 0$ implies that $\mathcal{S}_N$ is bounded. Secondly, the nonsingularity of $N$ and $N_{22}$ imply that the Schur complement $N_{11} - N_{12} N_{22}^{-1} N_{12}^\top$ is nonsingular, and since $N_{11} \geq 0$ and $N_{22} < 0$ we have $N_{11} - N_{12} N_{22}^{-1} N_{12}^\top > 0$. Thus, the matrix $\bar{Z} := - N_{22}^{-1} N_{12}^\top$ satisfies the Slater condition \eqref{matSlater}. The result now follows from Theorem \ref{strictmatSlemma}.
\end{proof}

It turns out that we can even state Theorem~\ref{strictmatSlemma} without the boundedness assumption if some more structure on the matrices $M$ and $N$ is given. In fact, we have the following result. 
\begin{theorem}
\label{strictmatSlemma2}
Let $M,N \in \mathbb{R}^{(k+n) \times (k+n)}$ be symmetric matrices, partitioned as in \eqref{partitionMN}. Assume that $M_{22} \leq 0$, $N_{22} \leq 0$ and $\ker N_{22} \subseteq \ker N_{12}$. Suppose that there exists some matrix $\bar{Z} \in \mathbb{R}^{n \times k}$ satisfying \eqref{matSlater}. Then we have that
    \begin{equation}
        \label{strictM2}
    \begin{bmatrix} I \\ Z \end{bmatrix}^\top M \begin{bmatrix} I \\ Z \end{bmatrix} > 0 \: \text{ for all }  Z \in \mathcal{S}_N
    \end{equation}
    if and only if there exist $\alpha \geq 0$ and $\beta > 0$ such that 
    $$
    M- \alpha N \geq \begin{bmatrix}
    \beta I & 0 \\ 0 & 0
    \end{bmatrix}.
    $$
\end{theorem}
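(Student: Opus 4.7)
The plan is to reduce the problem to the bounded strict matrix S-lemma (Theorem~\ref{strictmatSlemma}) by orthogonally separating out the trivial kernel of $N_{22}$. The ``if'' direction is immediate: if $M - \alpha N \geq \begin{bmatrix} \beta I & 0 \\ 0 & 0 \end{bmatrix}$ with $\alpha \geq 0$ and $\beta > 0$, then for any $Z \in \mathcal{S}_N$ the decomposition
$$
\begin{bmatrix} I \\ Z \end{bmatrix}^\top M \begin{bmatrix} I \\ Z \end{bmatrix}
= \begin{bmatrix} I \\ Z \end{bmatrix}^\top (M - \alpha N) \begin{bmatrix} I \\ Z \end{bmatrix} + \alpha \begin{bmatrix} I \\ Z \end{bmatrix}^\top N \begin{bmatrix} I \\ Z \end{bmatrix}
$$
shows the left-hand side is at least $\beta I > 0$.

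For the ``only if'' direction, I would first pick an orthogonal $T \in \mathbb{R}^{n \times n}$ such that $T^\top N_{22} T = \mathrm{diag}(\tilde N_{22}, 0)$ with $\tilde N_{22} < 0$; the hypothesis $\ker N_{22} \subseteq \ker N_{12}$ then forces $N_{12} T = \begin{bmatrix} \tilde N_{12} & 0 \end{bmatrix}$. Changing variables to $\tilde Z := T^\top Z = \begin{bmatrix} Z_1 \\ Z_2 \end{bmatrix}$ partitioned conformally, a direct computation gives
$$
\begin{bmatrix} I \\ Z \end{bmatrix}^\top N \begin{bmatrix} I \\ Z \end{bmatrix}
= \begin{bmatrix} I \\ Z_1 \end{bmatrix}^\top N' \begin{bmatrix} I \\ Z_1 \end{bmatrix}, \quad N' := \begin{bmatrix} N_{11} & \tilde N_{12} \\ \tilde N_{12}^\top & \tilde N_{22} \end{bmatrix},
$$
which is independent of $Z_2$. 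Hence $Z \in \mathcal{S}_N$ iff $Z_1 \in \mathcal{S}_{N'}$ with $Z_2$ completely free; moreover $\mathcal{S}_{N'}$ is bounded (since $\tilde N_{22} < 0$) and the Slater condition for $N'$ holds with witness the $Z_1$-block of $\bar Z$.

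The key intermediate step, and what I expect to be the main obstacle, is to show that in the transformed coordinates all blocks of $M$ that couple to the free variable $Z_2$ vanish. Writing $\tilde M := \mathrm{diag}(I,T)^\top M\, \mathrm{diag}(I,T)$ with obvious blocks $\tilde M_{12,j}$ and $\tilde M_{22,ij}$, I would fix $Z_1 = \bar Z_1$, pair \eqref{strictM2} with an arbitrary unit $v \in \mathbb{R}^k$, and let $z := Z_2 v$ range freely over $\mathbb{R}^{\dim \ker N_{22}}$. This reduces the inequality to a scalar quadratic $c + 2 a^\top z + z^\top \tilde M_{22,22} z > 0$ in $z$. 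Since $M_{22} \leq 0$ is preserved by the orthogonal change, $\tilde M_{22,22} \leq 0$, and the standard unbounded-growth argument forces $\tilde M_{22,22} = 0$. The ``zero diagonal block'' property of negative semidefinite matrices then gives $\tilde M_{22,12} = 0$, and the surviving linear term $2 v^\top \tilde M_{12,2} z$ must vanish for all $v, z$, yielding $\tilde M_{12,2} = 0$. Consequently $\tilde M = \mathrm{diag}(M', 0)$ with $M' := \begin{bmatrix} M_{11} & \tilde M_{12,1} \\ \tilde M_{12,1}^\top & \tilde M_{22,11} \end{bmatrix}$, and \eqref{strictM2} collapses to $\begin{bmatrix} I \\ Z_1 \end{bmatrix}^\top M' \begin{bmatrix} I \\ Z_1 \end{bmatrix} > 0$ for every $Z_1 \in \mathcal{S}_{N'}$.

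Finally I would apply Theorem~\ref{strictmatSlemma} to the pair $(M', N')$, obtaining $\alpha \geq 0$ with $M' - \alpha N' > 0$. Choosing any $\beta > 0$ no larger than the smallest eigenvalue of $M' - \alpha N'$ then gives $M' - \alpha N' \geq \begin{bmatrix} \beta I & 0 \\ 0 & 0 \end{bmatrix}$, and lifting through the block-orthogonal transformation $\mathrm{diag}(I, T)$ (which leaves this right-hand side invariant, as the identity $k \times k$ block is untouched and the trailing block is zero) produces $M - \alpha N \geq \begin{bmatrix} \beta I & 0 \\ 0 & 0 \end{bmatrix}$ in the original coordinates, as required.
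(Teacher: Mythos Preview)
Your proof is correct, and it takes a genuinely different route from the paper's.

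The paper works directly in the original coordinates: it first establishes the kernel inclusions $\ker N_{22} \subseteq \ker M_{22}$ and $\ker N_{22} \subseteq \ker M_{12}$, then uses a Bolzano--Weierstrass compactness argument on a sequence $\{Z_i\}$ (decomposed along $\mathcal{V} = \{Z : N_{22}Z = 0\}$ and its orthogonal complement) to extract a $\beta > 0$ for which the \emph{non-strict} inequality with $M - \mathrm{diag}(\beta I, 0)$ still holds on $\mathcal{S}_N$; it then invokes the non-strict matrix S-lemma (Theorem~\ref{matSlemma}). You instead diagonalize $N_{22}$ by an orthogonal change of variables, show explicitly that all blocks of $\tilde M$ coupling to the free direction $Z_2$ vanish (your argument here is equivalent in content to the paper's kernel inclusions, but packaged differently via the ``zero diagonal block'' trick for semidefinite matrices), and then apply the \emph{bounded} strict matrix S-lemma (Theorem~\ref{strictmatSlemma}) to the reduced pair $(M',N')$, lifting back at the end.

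Your approach is more constructive: it avoids the sequential compactness argument entirely and makes the reduction to the bounded case completely explicit. The paper's approach, by contrast, never changes coordinates and relies on the non-strict S-lemma as the workhorse; this is slightly more self-contained in that Theorem~\ref{matSlemma} is the more primitive result (Theorem~\ref{strictmatSlemma} is itself derived from it). Both arguments hinge on the same structural fact---that $M$ cannot ``see'' the directions in $\ker N_{22}$---but yours isolates this via algebra while the paper isolates it via analysis.
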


\begin{proof}
The ``if" part is clear so we focus on proving the ``only if" statement. Suppose that \eqref{strictM2} holds. We will first prove that $\ker N_{22} \subseteq \ker M_{22}$ and $\ker N_{22} \subseteq \ker M_{12}$. Let $Z \in \mathcal{S}_N$ and $\hat{Z} \in \mathbb{R}^{n \times k}$ be such that $N_{22} \hat{Z} = 0$. By the hypothesis $\ker N_{22} \subseteq \ker N_{12}$ we have $Z + \gamma \hat{Z} \in \mathcal{S}_N$ for any $\gamma \in \mathbb{R}$. Thus, we obtain
\begin{equation}
    \label{ZZhat}
\begin{bmatrix} I \\ Z \end{bmatrix}^\top M \begin{bmatrix} I \\ Z \end{bmatrix} + \gamma (M_{12}\hat{Z} + (M_{12}\hat{Z})^\top) + \gamma^2 \hat{Z}^\top M_{22} \hat{Z} > 0.
\end{equation}
This implies that $M_{22} \hat{Z} = 0$. Indeed, recall that $M_{22} \leq 0$. Thus, if $M_{22} \hat{Z} \neq 0$ then there exists a sufficiently large $\gamma$ such that \eqref{ZZhat} is violated. Similarly, we conclude that $M_{12} \hat{Z} = 0$. Therefore, we have shown that
\begin{equation}
    \label{inclusionskerMN}
    \ker N_{22} \subseteq \ker M_{22} \text{ and } \ker N_{22} \subseteq \ker M_{12}.
\end{equation}
Subsequently, we claim that there exists a $\beta > 0$ such that 
\begin{equation}
    \label{Mbeta}
\begin{bmatrix} I \\ Z \end{bmatrix}^\top \left(M- \begin{bmatrix}
\beta I & 0 \\ 0 & 0
\end{bmatrix} \right) \begin{bmatrix} I \\ Z \end{bmatrix} > 0 \: \: \text{for all } Z \in \mathcal{S}_N.
\end{equation}
If this claim is not true, then there exists a sequence $\{\beta_i\}$ such that $\beta_i \to 0$ and for all $i$ there exists $Z_i \in \mathcal{S}_N$ such that 
\begin{equation}
    \label{MbetanotPD}
\begin{bmatrix} I \\ Z_i \end{bmatrix}^\top \left( M-\begin{bmatrix}
\beta_i I & 0 \\ 0 & 0
\end{bmatrix} \right) \begin{bmatrix} I \\ Z_i \end{bmatrix} \not > 0.
\end{equation}

\begin{figure*}[b!]
\normalsize
\vspace*{4pt}
\hrulefill
\begin{equation}
\begin{bmatrix}
    P-\beta I & 0 & 0 & 0 \\
    0 & -P & -L^\top & 0 \\
    0 & -L & 0 & L \\
    0 & 0 & L^\top & P
    \end{bmatrix} - \alpha \begin{bmatrix}
    I & X_+ \\ 0 & -X_- \\ 0 & -U_- \\ 0 & 0
    \end{bmatrix}
    \begin{bmatrix}
    \Phi_{11} & \Phi_{12} \\
    \Phi_{12}^\top & \Phi_{22}
    \end{bmatrix}
    \begin{bmatrix}
    I & X_+ \\ 0 & -X_- \\ 0 & -U_- \\ 0 & 0
    \end{bmatrix}^\top \geq 0. \label{LMIstab}\tag{FS}
\end{equation}
%\hrulefill
%\vspace*{4pt}
\end{figure*}

Define $\mathcal{V} := \{ Z \in \mathbb{R}^{n \times k} \mid N_{22} Z = 0 \}$. Write $Z_i$ as $Z_i = Z_i^1 + Z_i^2$, where $Z_i^1 \in \mathcal{V}^\perp$ and $Z_i^2 \in \mathcal{V}$. By the hypothesis $\ker N_{22} \subseteq \ker N_{12}$ we see that $Z_i^1 \in \mathcal{S}_N$. Next, we claim that the sequence $\{ Z_i^1 \}$ is bounded. We will prove this claim by contradiction. Thus, suppose that $\{ Z_i^1 \}$ is unbounded. Clearly, the sequence 
$$
\left\{ \frac{Z_i^1}{\norm{Z_i^1}} \right\}
$$
is bounded. By the Bolzano-Weierstrass theorem it thus has a convergent subsequence with limit, say $Z_*$. Note that
$$
\frac{1}{\norm{Z_i^1}^2}\left( N_{11} + N_{12} Z_i^1 + (N_{12} Z_i^1)^\top + (Z_i^1)^\top N_{22} Z_i^1 \right) \geq 0.
$$
By taking the limit along the subsequence as $i \to \infty$, we get $Z_*^\top N_{22} Z_* \geq 0$. Using the fact that $N_{22} \leq 0$ we conclude that $Z_* \in \mathcal{V}$. Since $Z_i^1 \in \mathcal{V}^\perp$ for all $i$, also $\frac{Z_i^1}{\norm{Z_i^1}} \in \mathcal{V}^\perp$ and thus $Z_* \in \mathcal{V}^\perp$. Therefore, we conclude that both $Z_* \in \mathcal{V}$ and $Z_* \in \mathcal{V}^\perp$, i.e., $Z_* = 0$. This is a contradiction since $\frac{Z_i^1}{\norm{Z_i^1}}$ has norm 1 for all $i$. We conclude that the sequence $\{Z_i^1\}$ is bounded. It thus contains a convergent subsequence with limit, say $Z_{\infty}$. Note that $\mathcal{S}_N$ is closed and thus $Z_{\infty} \in \mathcal{S}_N$. By \eqref{inclusionskerMN} and \eqref{MbetanotPD} we conclude that
$$
\begin{bmatrix} I \\ Z_i^1 \end{bmatrix}^\top \left( M-\begin{bmatrix}
\beta_i I & 0 \\ 0 & 0
\end{bmatrix} \right) \begin{bmatrix} I \\ Z_i^1 \end{bmatrix} \not > 0
$$
for all $i$. We take the limit as $i \to \infty$, which yields  
$$
\begin{bmatrix} I \\ Z_\infty \end{bmatrix}^\top M \begin{bmatrix} I \\ Z_\infty \end{bmatrix} \not > 0.
$$
As $Z_{\infty} \in \mathcal{S}_N$ this contradicts \eqref{strictM2}. As such, we conclude that there exists $\beta > 0$ such that \eqref{Mbeta} holds. In particular, there exists $\beta > 0$ such that 
$$
\begin{bmatrix} I \\ Z \end{bmatrix}^\top \left(M- \begin{bmatrix}
\beta I & 0 \\ 0 & 0
\end{bmatrix} \right) \begin{bmatrix} I \\ Z \end{bmatrix} \geq 0 \: \: \text{for all } Z \in \mathcal{S}_N.
$$
The theorem now follows by application of Theorem~\ref{matSlemma}.
\end{proof}

\section{Data-driven stabilization revisited}
\label{sectionddstab}
In this section, we apply the theory from Section~\ref{sectionSlemma} to data-driven stabilization, i.e., to Problems \ref{problem1} and \ref{problem2} defined in Section~\ref{sectionproblem}. To this end, for given $P = P^\top > 0$ and $K$ we define the partitioned matrices 
\begin{align}
    M =& 
\begin{pmat}[{|}]
M_{11} & M_{12} \cr\-
M_{12}^\top & M_{22} \cr
\end{pmat} := \begin{pmat}[{|.}]
    P & 0 & 0 \cr\-
    0 & -P & -PK^\top \cr
    0 & -KP & -KPK^\top \cr
    \end{pmat}, \label{Mstab} 
    \\
    \label{Nstab}
N =& \begin{pmat}[{|}]
N_{11} & N_{12} \cr\- N_{12}^\top & N_{22} \cr
\end{pmat} \nonumber \\
:=& \begin{pmat}[{.}]
    I & X_+ \cr\- 0 & -X_- \cr 0 & -U_- \cr
    \end{pmat}
    \begin{bmatrix}
    \Phi_{11} & \Phi_{12} \\
    \Phi_{12}^\top & \Phi_{22}
    \end{bmatrix}
    \begin{pmat}[{.}]
    I & X_+ \cr\- 0 & -X_- \cr 0 & -U_- \cr
    \end{pmat}^\top. 
\end{align}
Recall from Section \ref{sectionproblem} that data-driven stabilization entails deciding whether \eqref{ineqABPK} holds for all $(A,B)$ satisfying \eqref{ineqAB}. In terms of the matrices $M$ and $N$ as defined above, we thus have to decide whether 
\begin{equation}
\label{implicationMNdata}
\begin{bmatrix} I \\ Z \end{bmatrix}^\top M \begin{bmatrix} I \\ Z \end{bmatrix} > 0 \:\: \forall  Z \in \mathbb{R}^{(n+m) \times n} \text{ with} \begin{bmatrix} I \\ Z \end{bmatrix}^\top N \begin{bmatrix} I \\ Z \end{bmatrix} \geq 0.
\end{equation}
Here $Z$ is given by 
$$
Z := \begin{bmatrix}
A^\top \\ B^\top 
\end{bmatrix}.
$$
The idea is now to apply Theorem~\ref{strictmatSlemma2}. To this end, we have to verify its assumptions. In particular, we will check that $M_{22} \leq 0$, $N_{22} \leq 0$ and $\ker N_{22} \subseteq \ker N_{12}$. Note that 
$$
M_{22} = - \begin{bmatrix}
I \\ K
\end{bmatrix} P \begin{bmatrix}
I \\ K
\end{bmatrix}^\top \leq 0, \: N_{22} = \begin{bmatrix}
X_- \\ U_-
\end{bmatrix} \Phi_{22} \begin{bmatrix}
X_- \\ U_-
\end{bmatrix}^\top \leq 0,
$$
because $P > 0$ and $\Phi_{22} < 0$. Since $\Phi_{22}$ is nonsingular, we also see that
\begin{align*}
\ker N_{22} &= \ker \begin{bmatrix}
X_- \\ U_-
\end{bmatrix}^\top, \\
\ker N_{12} &= \ker \bigg( (\Phi_{12}+X_+ \Phi_{22})\begin{bmatrix}
X_- \\ U_-
\end{bmatrix}^\top \bigg),
\end{align*}
and thus $\ker N_{22} \subseteq \ker N_{12}$. We conclude that the assumptions of Theorem~\ref{strictmatSlemma2} are satisfied. We assume that the generalized Slater condition \eqref{matSlater} holds for $N$ in \eqref{Nstab}. Then, Theorem~\ref{strictmatSlemma2} asserts that \eqref{implicationMNdata} holds if and only if there exist scalars $\alpha \geq 0$ and $\beta > 0$ such that
\begin{equation}
\label{ineqMNab}
M - \alpha N \geq \begin{bmatrix}
\beta I & 0  \\ 0 & 0 
\end{bmatrix}.
\end{equation}
From a design point of view, the matrices $P$ and $K$ that appear in $M$ are not given. However, the idea is now to \emph{compute} matrices $P$, $K$ and scalars $\alpha$ and $\beta$ such that \eqref{ineqMNab} holds. In fact, by the above discussion, the data $(U_-,X)$ are informative for quadratic stabilization \emph{if and only if} there exists an $n \times n$ matrix $P = P^\top > 0$, a $K \in \mathbb{R}^{m \times n}$ and two scalars $\alpha \geq 0$ and $\beta > 0$ such that \eqref{ineqMNab} holds. We note that \eqref{ineqMNab} (in particular, $M$) is not linear in $P$ and $K$. Nonetheless, by a rather standard change of variables and a Schur complement argument, we can transform \eqref{ineqMNab} into a linear matrix inequality. We summarize our progress in the following theorem, which is the main result of this section. 
\begin{theorem}
\label{theoremstab}
Assume that the generalized Slater condition \eqref{matSlater} holds for $N$ in \eqref{Nstab} and some $\bar{Z} \in \mathbb{R}^{(n+m)\times n}$. Then the data $(U_-,X)$ are informative for quadratic stabilization if and only if there exists an $n\times n$ matrix $P = P^\top > 0$, an $L \in \mathbb{R}^{m \times n}$ and scalars $\alpha \geq 0$ and $\beta > 0$ satisfying \eqref{LMIstab}.

Moreover, if $P$ and $L$ satisfy \eqref{LMIstab} then $K := L P^{-1}$ is a stabilizing feedback gain for all $(A,B) \in \Sigma$.
\end{theorem}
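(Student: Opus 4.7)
The plan is to combine Theorem~\ref{strictmatSlemma2} with a linearising change of variables. The paragraph immediately preceding the theorem already verifies that the matrices $M$ and $N$ in \eqref{Mstab}-\eqref{Nstab} satisfy the structural hypotheses of Theorem~\ref{strictmatSlemma2}, namely $M_{22} \leq 0$, $N_{22} \leq 0$ and $\ker N_{22} \subseteq \ker N_{12}$, and the Slater condition is assumed. Hence, by Theorem~\ref{strictmatSlemma2}, the data are informative for quadratic stabilization if and only if there exist $P = P^\top > 0$, $K$, $\alpha \geq 0$ and $\beta > 0$ satisfying \eqref{ineqMNab}. The whole task therefore reduces to showing that \eqref{ineqMNab} is solvable in $(P, K, \alpha, \beta)$ precisely when \eqref{LMIstab} is solvable in $(P, L, \alpha, \beta)$.

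The only obstruction to linearity in \eqref{ineqMNab} is the block $-K P K^\top$ sitting in the $(3,3)$-position of $M$. I would introduce the standard substitution $L := K P$, which is a bijection between gains $K$ and matrices $L \in \mathbb{R}^{m \times n}$ because $P \succ 0$. With this substitution $-P K^\top = -L^\top$ and $-K P K^\top = -L P^{-1} L^\top$, so the lower-right block of $M$ takes Schur form and only the term $L P^{-1} L^\top$ remains nonlinear.

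The third step is to apply a Schur complement to absorb that remaining nonlinearity into an enlarged linear matrix inequality. Concretely, I would augment the $3\times 3$ block inequality \eqref{ineqMNab} with a fourth block row and column carrying $P$ on the diagonal, $L$ and $L^\top$ in the $(3,4)$ and $(4,3)$ positions, and zeros elsewhere; the data matrix picks up a trailing row of zeros, yielding precisely \eqref{LMIstab}. Since the padding forces the $(4,4)$-block of the $\alpha N$ term to vanish, the $(4,4)$-block of the full $4 \times 4$ matrix in \eqref{LMIstab} equals $P \succ 0$, so the Schur complement with respect to it is well-defined and returns exactly $M - \alpha N - \mathrm{diag}(\beta I, 0)$. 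Consequently \eqref{LMIstab} is equivalent to \eqref{ineqMNab}, as required.

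For the ``moreover'' statement the argument is immediate: given any $(P, L, \alpha, \beta)$ feasible in \eqref{LMIstab}, set $K := L P^{-1}$; the Schur step then gives \eqref{ineqMNab} and Theorem~\ref{strictmatSlemma2} delivers \eqref{lyapunovineq} for every $(A,B) \in \Sigma$, which is the definition of $K$ being stabilizing for the entire consistency set. There is no deep obstacle in this plan; the main points to handle carefully are the bookkeeping in the Schur complement (so that $\alpha N$ is padded consistently and does not disturb the $(4,4)$-block), and checking the bijection $K \leftrightarrow L = KP$ on the solution sets. All of the nontrivial content has been absorbed into Theorem~\ref{strictmatSlemma2}.
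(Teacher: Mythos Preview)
Your proposal is correct and follows essentially the same route as the paper: invoke Theorem~\ref{strictmatSlemma2} (whose hypotheses on $M$ and $N$ are verified in the discussion preceding the statement), perform the change of variables $L=KP$, and use a Schur complement with respect to the $(4,4)$-block $P$ to pass between \eqref{ineqMNab} and \eqref{LMIstab}. The only imprecision is expository: when you ``augment'' \eqref{ineqMNab} you must simultaneously replace the $(3,3)$-block $-LP^{-1}L^\top$ by $0$ (as in \eqref{LMIstab}), not merely append a fourth row and column; your subsequent Schur-complement check shows you already have this in mind.
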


\begin{proof}
To prove the ``if" statement, suppose that there exist $P$, $L$, $\alpha$ and $\beta$ satisfying \eqref{LMIstab}. Define $K := L P^{-1}$. By computing the Schur complement of \eqref{LMIstab} with respect to its fourth diagonal block, we obtain \eqref{ineqMNab}. As such, \eqref{implicationMNdata} holds. We conclude that the data $(U_-,X)$ are informative for quadratic stabilization and $K = LP^{-1}$ is indeed a stabilizing controller for all $(A,B) \in \Sigma$. 

Conversely, to prove the ``only if" statement, suppose that the data $(U_-,X)$ are informative for quadratic stabilization. This means that there exist $P = P^\top > 0$ and $K$ such that \eqref{implicationMNdata} holds. By Theorem \ref{strictmatSlemma2} there exist $\alpha \geq 0$ and $\beta >0$ satisfying \eqref{ineqMNab}. Finally, by defining $L := KP$ and using a Schur complement argument, we conclude that \eqref{LMIstab} is feasible. 
\end{proof}

Theorem~\ref{theoremstab} provides a powerful necessary \emph{and} sufficient condition under which quadratically stabilizing controllers can be obtained from noisy data. The assumption \eqref{matSlater} puts a mild condition on the data matrices appearing in \eqref{Nstab}. It is satisfied whenever $N$ has at least $n$ positive eigenvalues, a condition that is simple to verify from given data. So far, this condition was satisfied in all of our numerical experiments, see Section~\ref{sectionsimulations} for more details\footnote{In addition, we remark that even if the generalized Slater condition does not hold, the `if' statement of Theorem \ref{theoremstab} remains true.}. Theorem~\ref{theoremstab} leads to an effective design procedure for obtaining stabilizing controllers directly from data. Indeed, the approach entails solving the linear matrix inequality \eqref{LMIstab} for $P, L, \alpha$ and $\beta$ and computing a controller as $K = LP^{-1}$. We now discuss some of the features of our control design procedure.
\begin{enumerate}
    \item First of all, we note that the procedure is \emph{non-conservative} since Theorem~\ref{theoremstab} provides a necessary and sufficient condition for obtaining quadratically stabilizing controllers from data. 
    \item We believe that our approach based on the set $\Sigma$ of \emph{open-loop} systems provides a valuable alternative to the data-based closed-loop system parameterizations of \cite[Thm. 2]{DePersis2020} and \cite[Thm. 4]{Berberich2019c}. Indeed, in the case of noisy data, it was recognized that certain linear constraints \cite[Eq. (3)]{Berberich2019c} defining these closed-loop systems were difficult to incorporate in the control design\footnote{In fact, it was mentioned in \cite{Koch2020} that involving the condition \cite[Eq. (3)]{Berberich2019c} in design procedures is still an open problem.}. Our design procedure does not suffer from the above problem. In fact, the constraint \cite[Eq. (3)]{Berberich2019c} is \emph{automatically incorporated} in our control design approach.
    \item The variables $P, L, \alpha$ and $\beta$ are \emph{independent} of the time horizon $T$ of the experiment. In fact, note that $P \in \mathbb{R}^{n \times n}$, $L \in \mathbb{R}^{m \times n}$ and $\alpha,\beta \in \mathbb{R}$. Also, the LMI \eqref{LMIstab} is of dimension $(3n+m) \times (3n+m)$ and thus independent of $T$. As such, our approach fundamentally differs from the design methods in \cite{DePersis2020,Berberich2019c} where certain decision variables have dimension $T \times n$, c.f. \cite[Thm. 6]{DePersis2020} and \cite[Cor. 6]{Berberich2019c}. We believe that our $T$-independent design method will play a crucial role in control design from larger data sets. We note that the collection of big data sets is often unavoidable, for example because the signal-to-noise ratio is small, or because the data-generating system is large-scale. 
\end{enumerate}

\begin{remark}
We note that under the extra assumption 
\begin{equation}
    \label{fullrank}
\rank \begin{bmatrix}
X_- \\ U_-
\end{bmatrix} = n+m
\end{equation}
it is possible to prove a variant Theorem~\ref{theoremstab} in which the non-strict inequality is replaced by a strict inequality, and the term $-\beta I$ is removed. This can be done by invoking Theorem~\ref{strictmatSlemma}, which is possible since \eqref{fullrank} implies that the set $\Sigma$ is bounded. The reason is that the coefficient matrix $N_{22}$ defining the quadratic term in \eqref{ineqAB} is negative definite if \eqref{fullrank} holds. 

Here we chose to state and prove Theorem~\ref{theoremstab} in the slightly more general setting without assuming \eqref{fullrank}. In the discussion preceding Theorem~\ref{theoremstab} we have verified the assumptions of Theorem~\ref{strictmatSlemma2} for $M$ and $N$ in \eqref{Mstab}, \eqref{Nstab}. In particular, this implies that the subspace inclusions \eqref{inclusionskerMN} hold and thus $\ker \begin{bmatrix}
X_-^\top & U_-^\top 
\end{bmatrix} \subseteq \ker \begin{bmatrix}
I & K^\top
\end{bmatrix}$, equivalently
\begin{equation}
    \label{IKXU}
\im \begin{bmatrix}
I \\ K
\end{bmatrix} \subseteq 
\im
\begin{bmatrix}
X_- \\ U_-
\end{bmatrix}.
\end{equation}
Therefore, any controller $K$ that stabilizes the systems in $\Sigma$ is necessarily of the form \eqref{IKXU}. This generalizes \cite[Lem. 15]{vanWaarde2020} to the case of noisy data.
\end{remark}

\section{Inclusion of performance specifications}
\label{sectionperformance}
In this section we extend our data-driven stabilization result by including different performance specifications. In particular, we will treat the $\mathcal{H}_2$ and $\mathcal{H}_\infty$ control problems, thereby illustrating the general applicability of the theory in Section~\ref{sectionSlemma}. 

\subsection{$\mathcal{H}_2$ control}
As before, consider the the unknown system \eqref{system1}. We associate to \eqref{system1} a performance output
\begin{equation}
    \bmz(t) = C\bmx(t) + D\bmu(t), \label{CD}
\end{equation}
where $\bmz \in \mathbb{R}^p$, and $C$ and $D$ are known matrices that specify the performance. For any $(A,B) \in \Sigma$ explaining the data, the feedback law $\bmu = K \bmx$ yields the closed-loop system 
\begin{equation}
\label{closedloop}
\begin{aligned}
    \bmx(t+1) &= (A+BK)\bmx(t) + \bmw(t) \\
    \bmz(t) &= (C+DK)\bmx(t).
\end{aligned}
\end{equation}
The transfer matrix from $\bmw$ to $\bmz$ of \eqref{closedloop} is given by
$$
G(z) := (C+DK) (zI-(A+BK))^{-1},
$$
and its $\mathcal{H}_2$ norm is denoted by $\norm{G(z)}_{\mathcal{H}_2}$. Let $\gamma > 0$. It is well-known that $A+BK$ is stable and $\norm{G(z)}_{\mathcal{H}_2} < \gamma$ if and only if there exists a matrix $P = P^\top > 0$ such that 

\begin{equation}
\begin{aligned}
\label{H2iff}
   P  &> (A+BK)^\top P (A+BK) + (C+DK)^\top (C+DK) \\
   \trace P &< \gamma^2,
\end{aligned}
\end{equation}
where $\trace$ denotes trace. The data-driven $\mathcal{H}_2$ problem entails the computation of a feedback gain $K$ from data such that $\norm{G(z)}_{\mathcal{H}_2} < \gamma$ \emph{for all} $(A,B) \in \Sigma$. Similar to our results for quadratic stabilization, we restrict the attention to a matrix $P$ that is common for all $(A,B)$. This leads to the following natural definition. 

\begin{definition}
Assume that $(U_-,X)$ satisfies \eqref{eqdatanoise} for some $W_-$ satisfying Assumption \ref{assumption1}.
The data $(U_-,X)$ are \emph{informative for $\mathcal{H}_2$ control} with performance $\gamma$ if there exist matrices $P = P^\top > 0$ and $K$ such that \eqref{H2iff} holds for all $(A,B) \in \Sigma$.
\end{definition}

With the theory of Section~\ref{sectionSlemma} in place, characterizing informativity for $\mathcal{H}_2$ control essentially boils down to massaging the inequalities \eqref{H2iff} such that they are amenable to design. To this end, note that the first inequality of \eqref{H2iff} is equivalent to 
$$
Y - A_{Y,L}^\top P A_{Y,L} - C_{Y,L}^\top C_{Y,L} > 0,
$$
where we defined $A_{Y,L} := AY + BL$ and $C_{Y,L} := CY + DL$ with $Y := P^{-1}$ and $L := KY$. Using a Schur complement argument, this is equivalent to
\begin{equation}
    \label{eqYLP}
\begin{bmatrix}
Y - C_{Y,L}^\top C_{Y,L} & A_{Y,L}^\top \\ 
A_{Y,L} & Y
\end{bmatrix} > 0,
\end{equation}
Now, \eqref{eqYLP} holds if and only if 
\begin{align}
    Y - C_{Y,L}^\top C_{Y,L} &> 0, \label{CLPCL}\\
    Y - A_{Y,L} (Y-C_{Y,L}^\top C_{Y,L})^{-1} A_{Y,L}^\top &> 0. \label{ALPAL}
\end{align}
Note that \eqref{CLPCL} is independent of $A$ and $B$. In turn, we can write \eqref{ALPAL} as
\begin{equation}
    \label{ineqABYL}
\begin{pmat}[{.}]
I \cr\- A^\top \cr B^\top \cr
\end{pmat}^\top 
\hspace{-4pt}
\underbrace{
\begin{pmat}[{|}]
Y & 0 \cr\-
0 & - \begin{bmatrix}
Y \\ L
\end{bmatrix}
(Y - C_{Y,L}^\top C_{Y,L})^{-1}
\begin{bmatrix}
Y \\ L
\end{bmatrix}^\top \cr
\end{pmat}
}_{=:M}
\hspace{-4pt}
\begin{pmat}[{}]
I \cr\- A^\top \cr B^\top \cr
\end{pmat} \hspace{-1pt} > \hspace{-1pt} 0.
\end{equation}
Note that the inequality \eqref{ineqABYL} is of a form where $A$ and $B$ appear on the left and their transposes appear on the right, analogous to \eqref{ineqABPK}. As such, we are in a position to apply Theorem~\ref{strictmatSlemma2}. In fact, we derive the following theorem.

\begin{theorem}
\label{theoremH2}
Assume that the generalized Slater condition \eqref{matSlater} holds for $N$ in \eqref{Nstab} and some $\bar{Z} \in \mathbb{R}^{(n+m)\times n}$. Then the data $(U_-,X)$ are informative for $\mathcal{H}_2$ control with performance $\gamma$ if and only if there exist matrices $Y = Y^\top > 0$, $Z = Z^\top$ and $L$, and scalars $\alpha \geq 0$ and $\beta > 0$ satisfying \eqref{LMIH2}.

Moreover, if $Y$ and $L$ satisfy \eqref{LMIH2} then $K := L Y^{-1}$ is such that $A+BK$ is stable and $\norm{G(z)}_{\mathcal{H}_2} < \gamma$ for all $(A,B) \in \Sigma$.
\end{theorem}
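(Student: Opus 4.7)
The plan is to follow the blueprint of the proof of Theorem \ref{theoremstab}: after the change of variables $Y := P^{-1}$ and $L := KY$, the first line of \eqref{H2iff} re-expresses as the QMI \eqref{ineqABYL} together with the $(A,B)$-independent condition \eqref{CLPCL}; then I apply the matrix S-lemma (Theorem \ref{strictmatSlemma2}) to \eqref{ineqABYL} to produce multipliers $\alpha, \beta$; and finally I linearize the resulting nonlinear inequality via Schur complements, together with a slack variable $Z = Z^\top$ encoding the trace constraint.

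For the ``only if'' direction, suppose $(U_-,X)$ are informative for $\mathcal{H}_2$ control with performance $\gamma$, so there exist $P = P^\top > 0$ and $K$ so that \eqref{H2iff} holds for every $(A,B) \in \Sigma$. Setting $Y := P^{-1}$ and $L := KY$, the computation leading to \eqref{ineqABYL} in the excerpt applies verbatim and splits the first line of \eqref{H2iff} into \eqref{CLPCL} together with the QMI \eqref{ineqABYL}. The matrix $N$ of \eqref{Nstab} satisfies $N_{22} \leq 0$ and $\ker N_{22} \subseteq \ker N_{12}$ as already verified in Section \ref{sectionddstab}, while the $M$ of \eqref{ineqABYL} has $M_{22} \leq 0$ precisely because \eqref{CLPCL} holds. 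With the assumed Slater condition, Theorem \ref{strictmatSlemma2} yields $\alpha \geq 0$ and $\beta > 0$ with $M - \alpha N \geq \begin{bmatrix} \beta I & 0 \\ 0 & 0 \end{bmatrix}$. For the trace bound, introduce $Z = Z^\top$ with $Z \geq Y^{-1}$ and $\trace Z \leq \gamma^2$; this is feasible since $\trace P < \gamma^2$, and the inequality $Z \geq Y^{-1}$ is Schur-equivalent to $\begin{bmatrix} Z & I \\ I & Y \end{bmatrix} \geq 0$.

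The remaining, purely calculational, step is to rewrite $M - \alpha N \geq \begin{bmatrix} \beta I & 0 \\ 0 & 0 \end{bmatrix}$ as an LMI in $(Y, L, Z, \alpha, \beta)$. The nonlinearity sits in the block $-\begin{bmatrix} Y \\ L \end{bmatrix} (Y - C_{Y,L}^\top C_{Y,L})^{-1} \begin{bmatrix} Y \\ L \end{bmatrix}^\top$ of $M$, with $C_{Y,L} = CY + DL$ linear in $(Y,L)$. Two Schur-complement moves dispose of it: one enlarges the matrix with a block $Y - C_{Y,L}^\top C_{Y,L}$ to eliminate the inverse, and a second extracts the quadratic term $C_{Y,L}^\top C_{Y,L}$ using linearity of $C_{Y,L}$. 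Combined with the Schur encoding of $Z \geq Y^{-1}$ and the affine constraint $\trace Z \leq \gamma^2$, this produces \eqref{LMIH2}. The ``if'' direction runs the same manipulations in reverse: from a solution to \eqref{LMIH2} one recovers $M - \alpha N \geq \begin{bmatrix} \beta I & 0 \\ 0 & 0 \end{bmatrix}$ together with $Z \geq Y^{-1}$ and $\trace Z \leq \gamma^2$; Theorem \ref{strictmatSlemma2} then delivers \eqref{ineqABYL} for every $(A,B) \in \Sigma$; and with $P := Y^{-1}$, $K := LY^{-1}$ the full condition \eqref{H2iff} follows, which is equivalent to $A+BK$ being stable and $\norm{G(z)}_{\mathcal{H}_2} < \gamma$.

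The main obstacle is not any single step but maintaining a faithful correspondence between the QMI picture (where Theorem \ref{strictmatSlemma2} applies directly) and the LMI picture (where computation happens) in both directions. In particular, the hypothesis $M_{22} \leq 0$ required by Theorem \ref{strictmatSlemma2} must be inherited automatically from the problem data: in the ``only if'' direction it comes for free from \eqref{CLPCL}, and in the ``if'' direction it is enforced by the same Schur-complement block of \eqref{LMIH2} that guarantees $Y - C_{Y,L}^\top C_{Y,L} > 0$. Everything else is the standard trace-slack reformulation of the $\mathcal{H}_2$ norm together with the Schur-complement gymnastics just described.
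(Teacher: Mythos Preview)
Your proposal is correct and follows essentially the same route as the paper: change of variables $Y=P^{-1}$, $L=KY$, split \eqref{H2iff} into \eqref{CLPCL} and the QMI \eqref{ineqABYL}, invoke Theorem~\ref{strictmatSlemma2}, and linearize via Schur complements together with the slack variable $Z$ for the trace bound. Two cosmetic remarks: the paper takes a single Schur complement with respect to the $2\times 2$ block $\begin{bmatrix} Y & C_{Y,L}^\top \\ C_{Y,L} & I \end{bmatrix}$ rather than your two nested ones (equivalent), and in the ``only if'' direction it simply sets $Z:=P$; also note the trace constraint in \eqref{LMIH2} is strict, $\trace Z < \gamma^2$, not $\leq$.
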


\begin{figure*}[b!]
\normalsize
\vspace*{4pt}
\hrulefill
\begin{equation}
\begin{bmatrix}
    Y- \beta I & 0 & 0 & 0 & 0 \\
    0 & 0 & 0 & Y & 0  \\
    0 & 0 & 0 & L & 0 \\
    0 & Y & L^\top & Y & C_{Y,L}^\top \\
    0 & 0 & 0 & C_{Y,L} & I
    \end{bmatrix} - \alpha \begin{bmatrix}
    I & X_+ \\ 0 & -X_- \\ 0 & -U_- \\ 0 & 0 \\ 0 & 0
    \end{bmatrix}
    \begin{bmatrix}
    \Phi_{11} & \Phi_{12} \\
    \Phi_{12}^\top & \Phi_{22}
    \end{bmatrix}
    \begin{bmatrix}
    I & X_+ \\ 0 & -X_- \\ 0 & -U_- \\ 0 & 0 \\ 0 & 0
    \end{bmatrix}^\top \geq 0, \quad 
    \begin{bmatrix}
    Y & C_{Y,L}^\top \\
    C_{Y,L} & I
    \end{bmatrix} > 0, \quad 
    \begin{matrix*}[r]
    \begin{bmatrix}
    Z & I \\ I & Y
    \end{bmatrix} \geq 0, \\[3mm]
    \trace Z < \gamma^2.
    \end{matrix*}
    \label{LMIH2}\tag{$\mathcal{H}_2$}
\end{equation}
%\hrulefill
%\vspace*{4pt}
\end{figure*}

\begin{proof}
Suppose that \eqref{LMIH2} is feasible and define $P:= Y^{-1}$ and $K := LP$. The last two inequalities of \eqref{LMIH2} imply that $\trace P < \gamma^2$. We now compute the Schur complement of the first LMI in \eqref{LMIH2} with respect to the diagonal block 
$$
\begin{bmatrix}
    Y & C_{Y,L}^\top \\
    C_{Y,L} & I
    \end{bmatrix}.
$$
We thereby make use of the fact that this block is nonsingular by the second LMI of \eqref{LMIH2}. The computation of the Schur complement results in
\begin{equation}
\label{alphabetaH2}
M - \alpha N \geq \begin{bmatrix}
\beta I & 0 \\ 0 & 0
\end{bmatrix},
\end{equation}
where $M$ is defined in \eqref{ineqABYL} and $N$ is defined in \eqref{Nstab}. We thus conclude that the inequality \eqref{ineqABYL} is satisfied for all $(A,B) \in \Sigma$. As such, \eqref{ALPAL} holds for all $(A,B) \in \Sigma$. Note that \eqref{CLPCL} holds by the second LMI of \eqref{LMIH2}. Therefore, we conclude that \eqref{H2iff} holds for all $(A,B) \in \Sigma$. In other words, the data $(U_-,X)$ are informative for $\mathcal{H}_2$ control with performance $\gamma$, and $K = LY^{-1}$ is a suitable controller. 

Conversely, suppose that the data $(U_-,X)$ are informative for $\mathcal{H}_2$ control with performance $\gamma$. Then there exist matrices $P = P^\top > 0$ and $K$ such that \eqref{H2iff} holds for all $(A,B) \in \Sigma$. Define $Y:=P^{-1}$, $L:=KY$ and $Z:=P$. Clearly, the last two inequalities of \eqref{LMIH2} are satisfied by definition of $Z$. In addition, we know that \eqref{CLPCL} and \eqref{ALPAL} hold for all $(A,B) \in \Sigma$.  
By \eqref{CLPCL}, the second LMI of \eqref{LMIH2} is satisfied. To prove that the first LMI of \eqref{LMIH2} also holds, we want to apply Theorem~\ref{strictmatSlemma2}. Note that we have already verified the assumptions of this theorem for the matrix $N$ in \eqref{Nstab}, see the discussion preceding Theorem~\ref{theoremstab}. In addition, we note that
$$
M_{22} = - \begin{bmatrix}
Y \\ L
\end{bmatrix}
(Y - C_{Y,L}^\top C_{Y,L})^{-1}
\begin{bmatrix}
Y \\ L
\end{bmatrix}^\top \leq 0
$$
since $Y - C_{Y,L}^\top C_{Y,L} > 0$. Hence, Theorem~\ref{strictmatSlemma2} is applicable. We conclude that there exist $\alpha \geq 0$ and $\beta > 0$ such that \eqref{alphabetaH2} holds. Using a Schur complement argument, we see that $Y$, $L$, $\alpha$ and $\beta$ satisfy the first LMI of \eqref{LMIH2}. Thus, \eqref{LMIH2} is feasible which proves the theorem. 
\end{proof}

\begin{remark}
If we know a priori that the noise $\bmw$ is contained in a subspace, say $\im E$, then this information can easily be exploited in the $\mathcal{H}_2$ controller design. In fact, we only need to replace the LMI involving $Z$ by
$$
\begin{bmatrix}
Z & E^\top \\
E & Y
\end{bmatrix} \geq 0. 
$$
We recall that prior knowledge of $\bmw \in \im E$, if available, can also be captured by our noise model, see Remark \ref{remarkE}. A natural choice is thus to use $E$ both in the noise model \eqref{asnoise} as well as in the LMI \eqref{LMIH2}. However, we remark that this is not necessary: the noise in the experiment may come from a different subspace than the disturbances that are attenuated by the $\mathcal{H}_2$ controller. %As a special case, note that with the choices $E = x(0)$, $C^\top D = 0$, $C^\top C = Q$ and $D^\top D = R$ we can also capture the typical LQR cost 
%$$
%\sum_{t = 0}^{\infty} x(t)^\top (Q + K^\top R K) x(t).
%$$
\end{remark}

\subsection{$\mathcal{H}_\infty$ control}
In this section we will turn our attention to the $\mathcal{H}_{\infty}$ control problem. As before, consider system \eqref{system1} with performance output \eqref{CD}. For any $(A,B) \in \Sigma$, the feedback $\bmu = K\bmx$ yields the system \eqref{closedloop} with transfer matrix from $\bmw$ to $\bmz$ given by $G(z)$. We will denote the $\mathcal{H}_{\infty}$ norm of $G(z)$ by $\norm{G(z)}_{\mathcal{H}_\infty}$. Let $\gamma > 0$. By \cite[Thm. 4.6.6(iii)]{Skelton1997}, the matrix $A+BK$ is stable and $\norm{G(z)}_{\mathcal{H}_\infty} < \gamma$ if and only if there exists a matrix $P = P^\top > 0$ such that 
\begin{align}
    P - A_K^\top (P^{-1} -\frac{1}{\gamma^2} I)^{-1} A_K - C_K^\top C_K &> 0, \label{Hinf1} \\
    P^{-1} -\frac{1}{\gamma^2} I &> 0, \label{Hinf2}
\end{align}
where we have defined $A_K := A+BK$ and $C_K := C+DK$. We now have the following definition of informativity for $\mathcal{H}_{\infty}$ control. 
\begin{definition}
Assume that $(U_-,X)$ satisfies \eqref{eqdatanoise} for some $W_-$ satisfying Assumption \ref{assumption1}.
The data $(U_-,X)$ are \emph{informative for $\mathcal{H}_\infty$ control} with performance $\gamma$ if there exist matrices $P = P^\top > 0$ and $K$ such that \eqref{Hinf1} and \eqref{Hinf2} hold for all $(A,B) \in \Sigma$.
\end{definition}

By pre- and postmultiplication of \eqref{Hinf1} by $P^{-1}$ we obtain 
\begin{align*}
    Y - A_{Y,L}^\top (Y -\frac{1}{\gamma^2} I)^{-1} A_{Y,L} - C_{Y,L}^\top C_{Y,L} &> 0, \\
    Y -\frac{1}{\gamma^2} I &> 0,
\end{align*}
where the matrices $Y := P^{-1}$, $L := KY$, $A_{Y,L} := AY + BL$ and $C_{Y,L}:=CY+DL$ are defined as in the $\mathcal{H}_2$ problem. Note that the first of these inequalities can again be written in the -by now familiar- form
\begin{equation*}
\begin{pmat}[{}]
I \cr\- A^\top \cr B^\top \cr
\end{pmat}^\top 
\begin{pmat}[{|}]
Y - C_{Y,L}^\top C_{Y,L} & 0 \cr\-
0 & - \begin{bmatrix}
Y \\ L
\end{bmatrix}
Z
\begin{bmatrix}
Y \\ L
\end{bmatrix}^\top \cr
\end{pmat}
\begin{pmat}[{}]
I \cr\- A^\top \cr B^\top \cr
\end{pmat} > 0,
\end{equation*}
where $Z := (Y - \frac{1}{\gamma^2}I)^{-1}$. We thus have the following theorem. 

\begin{figure*}[b!]
\normalsize
\vspace*{4pt}
\hrulefill
\begin{equation}
\begin{bmatrix}
    Y- \beta I & 0 & 0 & 0 & C_{Y,L}^\top \\
    0 & 0 & 0 & Y & 0  \\
    0 & 0 & 0 & L & 0 \\
    0 & Y & L^\top & Y-\frac{1}{\gamma^2}I & 0 \\
    C_{Y,L} & 0 & 0 & 0 & I
    \end{bmatrix} - \alpha \begin{bmatrix}
    I & X_+ \\ 0 & -X_- \\ 0 & -U_- \\ 0 & 0 \\ 0 & 0
    \end{bmatrix}
    \begin{bmatrix}
    \Phi_{11} & \Phi_{12} \\
    \Phi_{12}^\top & \Phi_{22}
    \end{bmatrix}
    \begin{bmatrix}
    I & X_+ \\ 0 & -X_- \\ 0 & -U_- \\ 0 & 0 \\ 0 & 0
    \end{bmatrix}^\top \geq 0, \quad 
    Y -\frac{1}{\gamma^2} I > 0.
    \label{LMIHinf}\tag{$\mathcal{H}_\infty$}
\end{equation}
%\hrulefill
%\vspace*{4pt}
\end{figure*}

\begin{theorem}
\label{theoremHinf}
Assume that the generalized Slater condition \eqref{matSlater} holds for $N$ in \eqref{Nstab} and some $\bar{Z} \in \mathbb{R}^{(n+m)\times n}$. Then the data $(U_-,X)$ are informative for $\mathcal{H}_\infty$ control with performance $\gamma$ if and only if there exist matrices $Y = Y^\top > 0$ and $L$, and scalars $\alpha \geq 0$ and $\beta > 0$ satisfying \eqref{LMIHinf}.

Moreover, if $Y$ and $L$ satisfy \eqref{LMIHinf} then $K := L Y^{-1}$ is such that $A+BK$ is stable and $\norm{G(z)}_{\mathcal{H}_\infty} < \gamma$ for all $(A,B) \in \Sigma$.
\end{theorem}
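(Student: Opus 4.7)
The plan is to follow exactly the blueprint already established in the proof of Theorem~\ref{theoremH2}, adjusted for the $\mathcal{H}_\infty$ Riccati-type inequality \eqref{Hinf1}. The overall strategy is: reformulate the closed-loop performance conditions so that they become a strict quadratic matrix inequality in $Z = (A^\top,B^\top)^\top$, apply the strict matrix S-lemma (Theorem~\ref{strictmatSlemma2}) to replace the implication ``\eqref{Hinf1} holds for all $(A,B)\in\Sigma$'' by an LMI involving multipliers $\alpha,\beta$, and finally use a Schur complement to linearize in $Y,L$.

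First I would rewrite \eqref{Hinf1} in the $Y,L$ coordinates. Pre- and post-multiplying \eqref{Hinf1} by $Y=P^{-1}$ (as already done in the excerpt) and setting $A_{Y,L}:=AY+BL$, $C_{Y,L}:=CY+DL$, $Z:=(Y-\tfrac{1}{\gamma^2}I)^{-1}$, this becomes
\[
Y - A_{Y,L}^\top Z A_{Y,L} - C_{Y,L}^\top C_{Y,L} > 0,
\]
which I would rearrange into the QMI form
\[
\begin{bmatrix} I \\ A^\top \\ B^\top \end{bmatrix}^\top \! M \begin{bmatrix} I \\ A^\top \\ B^\top \end{bmatrix} > 0, \quad M = \begin{bmatrix} Y-C_{Y,L}^\top C_{Y,L} & 0 \\ 0 & -\begin{bmatrix} Y \\ L \end{bmatrix}\! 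Z \!\begin{bmatrix} Y \\ L \end{bmatrix}^\top \end{bmatrix}.
\]
Note that $Z>0$ (by \eqref{Hinf2}, i.e., by the second LMI of \eqref{LMIHinf}), so $M_{22}\leq 0$. The conditions $N_{22}\leq 0$ and $\ker N_{22}\subseteq \ker N_{12}$ for $N$ as in \eqref{Nstab} were already verified in Section~\ref{sectionddstab}, and the Slater condition is assumed. Thus Theorem~\ref{strictmatSlemma2} applies and yields $\alpha\geq 0$, $\beta>0$ with $M - \alpha N \geq \bigl[\begin{smallmatrix} \beta I & 0 \\ 0 & 0 \end{smallmatrix}\bigr]$, valid for all $(A,B)\in\Sigma$ via Lemma~\ref{lemmaSigma}.

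The remaining step, and the main piece of bookkeeping, is converting the inequality $M-\alpha N\geq \bigl[\begin{smallmatrix}\beta I & 0\\ 0 & 0\end{smallmatrix}\bigr]$ into the linear form \eqref{LMIHinf} by a Schur complement with respect to the $(4,4)$ and $(5,5)$ diagonal blocks $Y-\tfrac{1}{\gamma^2}I$ and $I$ of the left-hand side of \eqref{LMIHinf}; using that the inverse of $\mathrm{diag}(Y-\tfrac{1}{\gamma^2}I,\, I)$ equals $\mathrm{diag}(Z,\,I)$ and that the off-diagonal blocks $(1,5)=C_{Y,L}^\top$, $(2,4)=Y$, $(3,4)=L$ combine to subtract precisely $\bigl[\begin{smallmatrix} C_{Y,L}^\top C_{Y,L} & 0 \\ 0 & [\begin{smallmatrix} Y \\ L \end{smallmatrix}]Z[\begin{smallmatrix} Y \\ L \end{smallmatrix}]^\top \end{smallmatrix}\bigr]$ from the upper left block, the Schur complement of the first LMI of \eqref{LMIHinf} matches the desired inequality $M-\alpha N\geq \bigl[\begin{smallmatrix}\beta I & 0\\ 0 & 0\end{smallmatrix}\bigr]$ exactly. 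This equivalence is the core computation and also the main obstacle, since all the dimensions and signs must line up precisely; once verified, both directions of the theorem follow immediately.

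For the ``if'' direction, I would start from a feasible $(Y,L,\alpha,\beta)$ of \eqref{LMIHinf}: the second LMI $Y>\tfrac{1}{\gamma^2}I$ is equivalent to \eqref{Hinf2} with $P=Y^{-1}$, while the Schur complement of the first LMI produces the QMI above, which by Lemma~\ref{lemmaSigma} guarantees \eqref{Hinf1} for every $(A,B)\in\Sigma$; hence $K:=LY^{-1}$ is a suitable controller. For the ``only if'' direction, I would start from $P,K$ certifying informativity, set $Y:=P^{-1}$, $L:=KY$, so that \eqref{Hinf2} gives the second LMI directly, invoke Theorem~\ref{strictmatSlemma2} on the QMI to obtain $\alpha\geq 0$, $\beta>0$ with $M-\alpha N\geq \bigl[\begin{smallmatrix}\beta I & 0 \\ 0 & 0\end{smallmatrix}\bigr]$, and use the same Schur complement in reverse to recover the first LMI of \eqref{LMIHinf}, completing the proof.
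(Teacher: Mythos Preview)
Your proposal is correct and follows exactly the approach the paper indicates: the paper explicitly states that the proof of Theorem~\ref{theoremHinf} ``is based on Theorem~\ref{strictmatSlemma2}'' and ``follows similar steps as the proof of Theorem~\ref{theoremH2},'' which is precisely what you carry out. Your Schur complement bookkeeping with respect to the block $\mathrm{diag}(Y-\tfrac{1}{\gamma^2}I,\,I)$ is correct and in fact supplies the details the paper omits.
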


The proof of Theorem~\ref{theoremHinf} is based on Theorem~\ref{strictmatSlemma2}. It follows similar steps as the proof of Theorem~\ref{theoremH2}, and is therefore not reported here. 

\section{Examples}
\label{sectionsimulations}
In this section we illustrate our theoretical results by examples and numerical simulations. 

\subsection{Stabilization using bounds on the noise samples}
Consider an unstable system of the form \eqref{system1} with $A_s$ and $B_s$ given by 
$$
A_s = 
\begingroup % keep the change local
\setlength\arraycolsep{2pt}
\begin{bmatrix}
0.850 &  -0.038  & -0.380 \\
    0.735  &  0.815  &  1.594 \\
   -0.664  &  0.697  & -0.064
\end{bmatrix}
\endgroup
, B_s = \begin{bmatrix}
1.431  &  0.705 \\
    1.620  & -1.129 \\
    0.913 &   0.369
\end{bmatrix}.
$$
In this example, we assume that the noise samples $w(t)$ are bounded in norm as $\norm{w(t)}_2^2 \leq \epsilon$ for all $t$. As explained in Section \ref{sectionproblem}, we can capture this prior knowledge using the noise model \eqref{asnoise} with $\Phi_{11} = T \epsilon I$, $\Phi_{12} = 0$ and $\Phi_{22} -I$. We pick a time horizon of $T = 20$ and draw the entries of the inputs and initial state randomly from a Gaussian distribution with zero mean and unit variance. The noise samples are drawn uniformly at random from the ball $\{ w \in \mathbb{R}^3 \mid \norm{w}^2_2 \leq \epsilon \}$. We aim at constructing stabilizing controllers from the input/state data for various values of $\epsilon$. In particular, we investigate six different noise levels: $\epsilon \in \{0.5, 1, 1.5, 2, 2.2, 2.4\}$. For each noise level, we generate $100$ data sets using the method described above. We check the generalized Slater condition \eqref{matSlater} by verifying that $N$ in \eqref{Nstab} has $3$ positive eigenvalues; this turns out to be true for all $600$ data sets. For each noise level, we record the percentage of data sets from which a stabilizing controller was found for $(A_s,B_s)$ using the formulation \eqref{LMIstab}. We display the results in the following table.

\begin{center}
\begin{tabular}{ |c|c|c|c|c|c| }
 \hline
$\epsilon = 0.5$  & $\epsilon =1$ &  $\epsilon =1.5$ & $\epsilon =2$ & $\epsilon =2.2$ & $\epsilon =2.4$ \\
 \hline
$100 \%$ & $96 \%$ & $90 \%$ & $82 \%$ & $75 \%$ & $73 \%$ \\
 \hline
\end{tabular}
\end{center}

For $\epsilon = 0.5$ we find a stabilizing controller in all $100$ cases. When the noise level increases, the percentage of data sets for which the LMI \eqref{LMIstab} is feasible decreases. The interpretation is that by increasing the noise we enlarge the set of explaining systems $\Sigma$. It thus becomes harder to simultaneously stabilize the systems in $\Sigma$. Nonetheless, even for the larger noise level of $\epsilon = 2.4$ we find a stabilizing controller in $73$ out of the $100$ data sets.

\subsection{$\mathcal{H}_2$ control of a fighter aircraft}
We consider a state-space model of a fighter aircraft \cite[Ex. 10.1.2]{Skelton1997}. In particular, we discretize the model of \cite{Skelton1997} using a sampling time of $0.01$, which results in the (unstable) system of the form \eqref{system1} with $A_s$ and $B_s$ given by
\begin{align*}
&\begin{bmatrix}
    1.000 &  -0.374 &  -0.190  & -0.321  &  0.056  & -0.026 \\
    0.000  &  0.982  &  0.010  & -0.000 &  -0.003  &  0.001 \\
    0.000  &  0.115  &  0.975  & -0.000  & -0.269 &   0.191 \\
    0.000  &  0.001   &  0.010  &  1.000  & -0.001  &  0.001 \\
         0.000  &   0.000      &  0.000    &     0.000  &  0.741      &   0.000 \\
         0.000  &       0.000    &     0.000  & 0.000    &     0.000   & 0.741 \\
\end{bmatrix}, \\
&\begin{bmatrix}
0.007 & 0.000 & -0.043 & 0.000 & 0.259 & 0.000 \\
-0.003 & 0.000 & 0.030 & 0.000 & 0.000 & 0.259
\end{bmatrix}^\top,
\end{align*}
respectively. We consider the performance output as in \eqref{CD} with 
$$
C = \begin{bmatrix}
0 & 0 & 0 & 0 & 0 & 1
\end{bmatrix}
$$
and $D = 0$. First, we look for the smallest $\gamma$ such that \eqref{H2iff} is feasible for $(A_s,B_s)$. This minimum value of $\gamma$ is $1.000$ and can be regarded as a benchmark: no data-driven method can perform better than the model-based solution using full knowledge of $(A_s,B_s)$.

Of course, our goal is not to use the knowledge of $(A_s,B_s)$ but to seek a data-driven solution instead. Therefore, we collect $T = 750$ input and state samples of \eqref{system1}. The entries of the inputs and initial state were drawn randomly from a Gaussian distribution with zero mean and unit variance. Also the noise samples were drawn randomly from a Gaussian distribution, with zero mean and variance $\sigma^2$ with $\sigma = 0.005$. In this example, we assume knowledge of a bound on the energy of the noise as 
\begin{equation}
\label{boundWH2}
W_- W_-^\top \leq 1.35 T \sigma^2 I.
\end{equation}
We verified that this bound is satisfied for the generated noise sequence. In addition, we verified that the matrix $N$ in \eqref{Nstab} has $6$ positive eigenvalues, thus the generalized Slater condition \eqref{matSlater} holds. 

Next, we want to compute an $\mathcal{H}_2$ controller for the unknown system using the generated data. We do so by minimizing $\gamma$ subject to \eqref{LMIH2}. This is a semidefinite program that we solve in Matlab, using Yalmip \cite{Lofberg} with Mosek as an LMI solver. The obtained controller $K$ stabilizes the original system $(A_s,B_s)$. In addition, the system, in feedback with $K$, has an $\mathcal{H}_2$ norm of $\gamma_s$ where $\gamma^2_s = 1.007$. We note that this is almost identical to the smallest possible $\mathcal{H}_2$ norm of $1.000$.

Subsequently, we repeat the above experiment using only a \emph{part} of our data set. In particular, we compute an $\mathcal{H}_2$ controller via the semidefinite program as before, using only the first $i$ samples of $X_+, X_-$ and $U_-$ for $i = 50,100,\dots,750$. We display the results in Figure \ref{fig:plotH2}.
\begin{figure}[ht]
    \centering
    \includegraphics[trim=30 0 50 10,clip,width=0.48\textwidth]{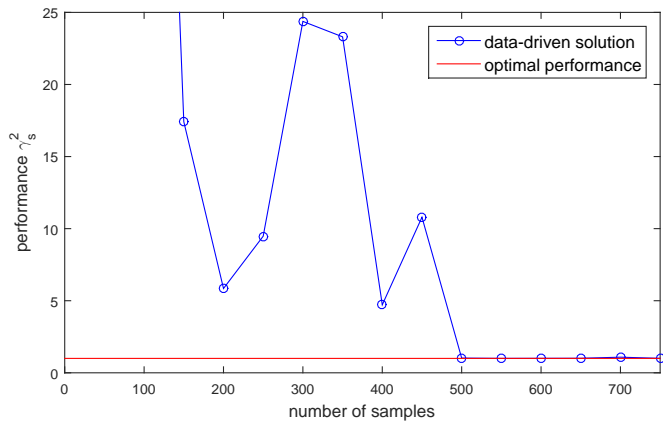}
    \caption{Achieved $\mathcal{H}_2$ performance of the true system in feedback with a data-based controller (blue) and the optimal (model-based) performance of the true system (red).}
    \label{fig:plotH2}
\end{figure}

In each of the cases a stabilizing controller was found from data. However, the performance of these controllers when applied to the true system varies, and is quite poor for $i < 500$. Starting from $i = 500$ and onward, the performance is close to the optimal performance of the true system. 

Next, we investigate what happens when we increase the variance $\sigma^2$ of the noise. First, we take $\sigma = 0.05$. We again generate $750$ data samples, and assume the same bound on the noise. The $\mathcal{H}_2$ controller achieves a performance of $\gamma^2_s = 1.146$ when interconnected to the true system. Increasing the variance of the noise has the effect that the set $\Sigma$ of explaining systems becomes larger. As such, it is more difficult to control all systems in $\Sigma$ resulting in a slightly larger $\gamma_s$. This behavior becomes even more apparent when increasing the variance of the noise to $\sigma = 0.5$. In this case we obtain a controller that yields a performance of $\gamma_s^2 = 3.579$. Increasing $\sigma$ even more to $\sigma = 1$ results in infeasibility of the LMI's \eqref{LMIH2} for any $\gamma$; the set of explaining systems has become too large for a quadratically stabilizing controller to exist. 

We remark that the size of the set $\Sigma$ does not only depend on the variance of the noise, but also on the available bound on the noise. Throughout this example, we have used the bound \eqref{boundWH2}. However, if we reconsider the case of $\sigma = 0.5$ with the tighter bound $W_-W_-^\top \leq 1.22 T \sigma^2 I$ (which is also satisfied in this example) we obtain a controller with better performance $\gamma_s^2 = 2.706$. This illustrates the simple fact that data-driven controllers not only depend on the particular design strategy, but also on the \emph{prior knowledge} on the noise. 

We conclude the example with a remark on the dimension of the variables involved in the formulation \eqref{LMIH2}. The symmetric matrices $Y$ and $Z$ both have $21$ free variables. The matrix $L$ contains $12$ variables, and $\alpha$ and $\beta$ are both scalar variables. Thus, the total number of variables is $56$. The size of the largest LMI in \eqref{LMIH2} is $21 \times 21$. We emphasize that our approach is based directly on the set $\Sigma$ of open-loop systems and avoids the parameterization of closed-loop systems, as employed in \cite{DePersis2020,Berberich2019c}. Such parameterizations involve decision variables of dimension $T \times n$, which would result in at least $4500$ variables in this example.

\subsection{Comparison with related results}
As we have mentioned before, one of the advantages of our approach compared to existing work is that our LMI condition provides necessary and sufficient conditions for data-driven quadratic stabilization. The purpose of this example is to demonstrate that our results can thus lead to stabilizing controllers even in situations where the results from \cite{DePersis2020,Berberich2019c} cannot. 

Consider the system \eqref{system1}, where $A_s = 1$ and $B_s = 1$. Suppose that $T = 3$ and the noise matrix is given by
$$
W_- = \begin{bmatrix}
\frac{1}{2} & \frac{1}{2} & \frac{1}{2}
\end{bmatrix}.
$$
We collect the data samples 
\begin{align*}
X &= \begin{bmatrix}
0 & 0 & 1 & 0
\end{bmatrix} \\
U_- &= \begin{bmatrix}
-\frac{1}{2} & \frac{1}{2} & -\frac{3}{2}
\end{bmatrix}.
\end{align*}
Throughout the example, we assume that we have access to the noise bound $W_- W_-^\top \leq 1$. Note that this bound is indeed satisfied, and that it can be captured using Assumption \ref{assumption1} by the choices $\Phi_{11} = 1$, $\Phi_{12} = 0$ and $\Phi_{22} = -I$. We also note that by Remark \ref{remarkBerb}, this noise bound can be captured by \cite[Asm. 3]{Berberich2019c} with the choices $Q_w = -1$, $S_w = 0$ and $R_w = I$. Finally, we note that the noise bound can be captured by \cite[Asm. 5]{DePersis2020} with the choice $\gamma = 1$. As such, we can compare the design methods reported in Theorem \ref{theoremstab} of this paper with the approaches in \cite[Cor. 6, Rem. 7]{Berberich2019c} and \cite[Thm. 6]{DePersis2020}\footnote{We note that \cite{DePersis2020} studies stabilization in the setting that $\bmw$ represents a bounded nonlinearity. The interpretation of $\bmw$, however, is not important for this comparison. In fact, our results (as well as those in \cite{Berberich2019c}) are applicable to general bounded disturbances, hence also to disturbances resulting from bounded nonlinearities.}.

We start by applying Theorem \ref{theoremstab} to the data in this example. Note that $X_- = \begin{bmatrix}
0 & 0 & 1
\end{bmatrix}$ and $X_+ = \begin{bmatrix}
0 & 1 & 0
\end{bmatrix}$. 
It can be easily verified that $(P,L,\alpha,\beta) = (0.9,-1.35,1.1,0.18)$ is a solution to \eqref{LMIstab}. In addition, the Slater condition is satisfied in this example. As such, we conclude by Theorem \ref{theoremstab} that the controller $K = L/P = -1.5$ is stabilizing for all $(A,B) \in \Sigma$. In particular, we see that the true closed-loop system matrix $-0.5$ is stable. 

We will now investigate the design method of \cite[Thm. 6]{DePersis2020}. This approach involves finding a matrix $Q$ and a scalar $\alpha > 0$ such that $X_- Q$ is symmetric and 
\begin{align}
\label{DePersis1}
\begin{bmatrix}
X_- Q - \alpha X_+ X_+^\top & X_+ Q \\ Q^\top X_+^\top & X_- Q
\end{bmatrix} >0, \:\: \begin{bmatrix}
I & Q \\ Q^\top & X_- Q
\end{bmatrix} > 0, \\
\label{DePersis2}
\frac{\alpha^2}{4+2\alpha} > \gamma,
\end{align} 
where we recall that $\gamma = 1$ in this example. We will now show that \eqref{DePersis1}, \eqref{DePersis2} do not have a solution $(Q,\alpha)$, and thus, the design procedure from \cite[Thm. 6]{DePersis2020} cannot find a controller that is guaranteed to stabilize the true system. To see this, note that \eqref{DePersis2} and $\alpha > 0$ imply $\alpha > 1+\sqrt{5}$. We write $Q = \begin{bmatrix}
q_1 & q_2 & q_3
\end{bmatrix}^\top$ and note that by the upper left block of the first matrix in \eqref{DePersis1}, we have $q_3 > \alpha$, thus $q_3 > 1$. Now, by taking the Schur complement of the second matrix in \eqref{DePersis1} with respect to the upper left block, we obtain $q_3 > q_1^2 + q_2^2 + q_3^2$. However, this inequality cannot be satisfied since $q_3 > 1$. As such, we conclude that \eqref{DePersis1}, \eqref{DePersis2} do not have a solution. 

Next, we turn our attention to the design procedure of \cite[Rem. 7]{Berberich2019c}. For $S_w = 0$, this procedure boils down to finding a solution $(\mathcal{Y},M)$ to 
\begin{align}
\label{Berberich1}
\begin{bmatrix}
-\mathcal{Y} & 0 & M^\top X_+^\top & M^\top \\
0 & Q_w & 1 & 0 \\
X_+ M & 1 & -\mathcal{Y} & 0 \\ 
M & 0 & 0 & -R_w^{-1}
\end{bmatrix} &< 0, \\
\label{Berberich2}
X_- M &= \mathcal{Y},
\end{align}
where we recall that $Q_w = -1$ and $R_w = I$ in this example. We will now show that \eqref{Berberich1}, \eqref{Berberich2} do not have a solution $(\mathcal{Y},M)$. To see this, note that \eqref{Berberich2} implies $M = \begin{bmatrix}
m_1 & m_2 & \mathcal{Y}
\end{bmatrix}^\top$ with $m_1,m_2 \in \mathbb{R}$. The negative definiteness of the submatrix of \eqref{Berberich1} consisting of the second and third row and column imply $\mathcal{Y} > 1$. Moreover, by inspection of the first and fourth row and column block of \eqref{Berberich1} we see that $-\mathcal{Y} + M^\top M < 0$ and thus, $-\mathcal{Y} + m_1^2+m_2^2+\mathcal{Y}^2 < 0$. This inequality, however, cannot be satisfied as $\mathcal{Y} > 1$. As such, we see that \eqref{Berberich1} and \eqref{Berberich2} do not have a solution $(\mathcal{Y},M)$. 

We conclude that Theorem \ref{theoremstab} can be successfully applied even in situations in which the design procedures of \cite{Berberich2019c,DePersis2020} do not lead to controllers that are guaranteed to stabilize the true system.

\section{Discussion and conclusions}
\label{sectionconclusions}

We have studied the problem of obtaining feedback controllers from noisy data. The essence of our approach has been to formulate data-driven control as the problem of determining when one quadratic matrix inequality implies another one. To get a grip on this fundamental question, we have generalized the classical S-lemma \cite{Yakubovich1977} to matrix variables. The implication involving quadratic matrix inequalities is thereby \emph{equivalent} to a linear matrix inequality in a scalar variable. We have established several versions of the matrix S-lemma, for both strict and non-strict inequalities. These matrix S-lemmas are interesting in their own right, and generalize existing S-lemmas \cite{Polik2007} as well as a theorem involving quadratic matrix inequalities \cite{Luo2004}. 

We have followed up by applying our matrix S-lemma to data-driven control. In particular, we have given necessary and sufficient conditions under which stabilizing, $\mathcal{H}_2$, and $\mathcal{H}_\infty$ controllers can be obtained from noisy data. Our control design revolves around data-guided linear matrix inequalities, which can be solved efficiently using modern LMI solvers. In addition to being non-conservative, an attractive feature of our design procedure is that decision variables are \emph{independent} of the time horizon of the experiment. 

So far, we have only applied the matrix S-lemma involving a strict inequality (Thms. \ref{strictmatSlemma}, \ref{strictmatSlemma2}) to data-driven control. However, we are convinced that also the matrix S-lemma with \emph{non-strict} inequalities (Thm. \ref{matSlemma}) will find applications, for example, in the verification of dissipativity properties from data \cite{Koch2020}. 

The noise model that we have employed is flexible, and can describe, e.g., constant disturbances, energy bounded noise and norm bounds on noise samples. If one is only interested in the latter, however, we expect that more specific control techniques are possible. In fact, analogous to \eqref{ineqAB}, we can write the inequality $w(t)^\top w(t) \leq \epsilon$ as 
\begin{equation*}
    \begin{bmatrix}
    I \\ A^\top \\ B^\top 
    \end{bmatrix}^\top 
    \begingroup % keep the change local
\setlength\arraycolsep{2pt}
   \underbrace{\begin{bmatrix}
    I & x(t+1) \\ 0 & -x(t) \\ 0 & -u(t)
    \end{bmatrix}
    \begin{bmatrix}
    \epsilon I & 0 \\
    0 & -I
    \end{bmatrix}
    \begin{bmatrix}
    I & x(t+1) \\ 0 & -x(t) \\ 0 & -u(t)
    \end{bmatrix}^\top}_{:= N_t}
    \endgroup
    \begin{bmatrix}
    I \\ A^\top \\ B^\top 
    \end{bmatrix} \geq 0.
\end{equation*}
In the spirit of the S-procedure, one could thus design a stabilizing controller by computing\footnote{This procedure is likely to be conservative, however, since the classical S-lemma is in general conservative for more than two quadratic functions \cite{Polik2007}.} matrices $P = P^\top > 0$ and $K$, and \emph{multiple} non-negative scalars $\alpha_0,\alpha_1,\dots,\alpha_{T-1}$ such that
$$
M-\sum_{t = 0}^{T-1} \alpha_t N_t > 0,
$$
with $M$ given by \eqref{Mstab}. We will consider norm bounded noise samples in more detail in future work. 

Yet another idea for future work is to extend the current results for state-feedback design to data-driven dynamic output feedback design. Specifically, it would be interesting to see whether the matrix S-lemmas can be applied to obtain dynamic output feedback controllers from a finite set of noisy \emph{input/output} samples.

\bibliographystyle{IEEEtran}
\bibliography{references}

\end{document}